\newtheorem{lem}{Lemma}
\newtheorem{thm}{Theorem}
\begin{document}
\title{Contractible Subgraphs of Contraction Critically Quasi $5$-Connected Graphs}

\author{Shuai Kou$^a$, \quad Chengfu Qin$^b$, \quad Weihua Yang$^a$\footnote{Corresponding author. E-mail: ywh222@163.com,~yangweihua@tyut.edu.cn}\\
\small $^a$Department of Mathematics, Taiyuan University of Technology, Taiyuan 030024, China\\
\small $^b$Department of Mathematics, Guangxi Teachers Education University, Nanning 530001, China\\
}
\date{}

\maketitle {\flushleft\bf Abstract:} {\small  Let $G$ be a contraction critically quasi $5$-connected graph on at least $14$ vertices. If there is a vertex $x\in V_{4}(G)$ such that $G[N_{G}(x)]\cong K_{1,3}$ or $G[N_{G}(x)]\cong C_{4}$, then $G$ has a quasi $5$-contractible subgraph $H$ such that $0<\|V(H)\|<4$.}

{\flushleft\bf Keywords}:
quasi $5$-connected; $4$-degree vertices; contractible subgraphs

\section{Introduction}
In this paper, we only consider finite simple undirected graphs, with undefined terms and notations following \cite{Bondy}.
For a graph $G$, let $V(G)$ and $E(G)$ denote the set of vertices of $G$ and the set of edges of $G$, respectively. Let $V_{k}(G)$ denote the set of vertices of degree $k$ in $G$.

A $cut$ of a connected graph $G$ is a subset $V^{\prime}(G)$ of $V(G)$ such that $G-V^{\prime}(G)$ is disconnected. A $k$-$cut$ is a cut of $k$ elements. Suppose $T$ is a $k$-cut of $G$. We say that $T$ is a $nontrivial$ $k$-$cut$, if the components of $G-T$ can be partitioned into subgraphs $G_{1}$ and $G_{2}$ such that $|V(G_{1})|\geq 2$ and $|V(G_{2})|\geq 2$.
By $\mathscr{T}(G):=\{T\subseteq V(G): T$ is a cut of $G, |T|=\kappa(G)\}$, we denote the set of $smallest$ $cuts$ of $G$. For $T\in\mathscr{T}(G)$, the union of the vertex sets of at least one but not of all components of $G-T$ is called a $T$-$fragment$ of $G$ or, briefly, a $fragment$.
A ($k$-$1$)-connected graph is $quasi$ $k$-$connected$ if it has no nontrivial ($k$-$1$)-cuts. Clearly, every $k$-connected graph is quasi $k$-connected.

An edge $e=xy$ of $G$ is said to be $contracted$ if it is deleted and its ends are identified, the resulting graph is denoted by $G/e$. And the new vertex in $G/e$ is denoted by $\overline{xy}$.
Let $k$ be an integer such that $k\geq 2$ and let $G$ be a $k$-connected graph with $|V(G) |\geq k+2$. An edge $e$ of $G$ is said to be $k$-$contractible$ if the contraction of the edge results in a $k$-connected graph. Note that, in the contraction, we replace each resulting pair of double edges by a simple edge.
A subgraph of a $k$-connected graph $G$ is said to be $k$-contractible if its contraction (i.e., identifying each components to a single vertex, removing each of the resulting loops and, finally, replacing each of the resulting duplicate edges by a single edge) results again a $k$-connected graph.

By Tutte's constructive characterization of $3$-connected graphs\cite{Tutte}, we see that every $3$-connected graph except $K_{4}$ has a $3$-contractible edge. Thomassen\cite{Thomassen} showed that for $k\geq4$, there are infinitely many $k$-connected $k$-regular graphs which do not have a $k$-contractible edge. These graphs are said to be $contraction$ $critically$ $k$-$connected$.

However, every $4$-connected graph on at least seven vertices can be reduced to a smaller $4$-connected graph by contracting one or two edges subsequently. So, naturally, the question that whether there exist positive integers $b$ and $h$ such that every $k$-connected graph on more than $b$ vertices can be reduced to a more smaller $k$-connected graph by contracting less than $h$ edges for every $k\geq1$ is posted\cite{Kriesell1}. It is holds for $k=1, 2, 3$ and $4$. But for $k\geq6$, such a statement fails since toroidal triangulations of large face width is a counterexample\cite{Kriesell2}. The question is still open for $k=5$.

We focus on quasi $5$-connected graphs and obtain the following results.

\begin{thm}\label{thm1}
Let $G$ be a contraction critically quasi $5$-connected graph on at least $14$ vertices. If there is a vertex $x\in V_{4}(G)$ such that $G[N_{G}(x)]\cong K_{1,3}$, then $G$ has a quasi $5$-contractible subgraph $H$ such that $0<\|V(H)\|<4$.
\end{thm}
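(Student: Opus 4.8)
The plan is to argue by contradiction: assume that no subgraph $H$ with $0<|V(H)|<4$ is quasi $5$-contractible, and extract enough structure from the non-contractibility of a short list of candidate subgraphs to violate either the $4$-connectivity or the ``no nontrivial $4$-cut'' property of $G$. First I would fix notation: write $N_G(x)=\{a,b,c,d\}$ and, since $G[N_G(x)]\cong K_{1,3}$, take $a$ to be the centre, so that $ab,ac,ad\in E(G)$ while $b,c,d$ are pairwise nonadjacent. A first easy reduction shows $\deg_G(a)\ge 5$: if $\deg_G(a)=4$ then $N_G(a)=\{x,b,c,d\}$, and removing $\{b,c,d\}$ isolates the component $\{x,a\}$ from the remaining (nonempty, as $|V(G)|\ge 14$) vertices, giving a $3$-cut that contradicts $4$-connectivity. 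Hence $a$ has a neighbour outside $\{x,b,c,d\}$, and $x,a$ form an adjacent pair that both dominate the independent set $\{b,c,d\}$.

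The second step is to record a contractibility criterion for a connected subgraph $H$ with $|V(H)|=3$. Contracting $H$ lowers connectivity by at most $2$, so $\kappa(G/H)\ge 2$, and imitating the standard edge computation one checks that $H$ fails to be quasi $5$-contractible exactly when $G$ contains one of the following. (A) A cut $T$ with $V(H)\subseteq T$ and $4\le|T|\le 5$ whose image in $G/H$ is a cut of order at most $3$; this records a loss of $4$-connectivity. (B) A nontrivial cut $T$ with $V(H)\subseteq T$, $|T|=6$, and $G-T$ splitting into two parts each of order at least $2$; this is the only way a nontrivial $4$-cut can appear in $G/H$, since collapsing $V(H)$ to one vertex turns such a $6$-separator into a $4$-cut, while $G$ itself has no nontrivial $4$-cut. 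I would apply this criterion to the candidate pool consisting of the three triangles $\{x,a,\beta\}$ with $\beta\in\{b,c,d\}$ and the three paths $\{\beta,x,\gamma\}$ with $\{\beta,\gamma\}\subseteq\{b,c,d\}$; contraction criticality already removes every single edge, so it is one of these $3$-vertex subgraphs that we must certify as contractible.

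Next I would translate alternative (A) for each candidate into a concrete tight separator and use the $K_{1,3}$ geometry to localize it. Because both $x$ and $a$ are adjacent to all of $b,c,d$, any cut $T$ with $V(H)\subseteq T$ and $|T|\le 5$ witnessing the failure of a triangle $\{x,a,\beta\}$ removes both dominators of $\{b,c,d\}$ using at most two further vertices; the independence of $b,c,d$ together with $\delta(G)\ge 4$ then forces a component of $G-T$ to be small and concentrated near $x$. I would record, for each candidate, the associated fragment and invoke the usual fragment intersection lemmas (minimality of fragments and submodularity of the neighbourhood function) to show that fragments of two different candidates are either nested or crossing, and that crossing yields a cut of order at most $3$, contradicting $4$-connectivity. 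The independence of $\{b,c,d\}$ is precisely what makes these separators too thin to survive.

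The main obstacle, treated last, is alternative (B): excluding the possibility that every candidate's contraction merely manufactures a nontrivial $4$-cut. Such a cut comes from a balanced nontrivial $6$-separator $T\supseteq V(H)$, and here the hypothesis $|V(G)|\ge 14$ enters. Each candidate pins down five named vertices ($x$, $a$, and two or three of $b,c,d$), while $x\in T$ distributes its whole neighbourhood $\{a,b,c,d\}$ between $T$ and the two sides, each of which has order at least $2$. Intersecting the at most six such $6$-cuts and using $\deg_G(a)\ge 5$ together with the independence of $b,c,d$, I would show that one of them must either cross another separator to produce a cut of order at most $3$, or force $G-T$ to be so small that $|V(G)|\le 13$; either conclusion contradicts the standing hypotheses, so (B) cannot hold simultaneously for all candidates. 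The bookkeeping of how these $6$-cuts overlap, and the proof that they cannot jointly spoil all six candidates, is the technical heart of the argument; once it is in place, some triangle or path contracts to a quasi $5$-connected graph, yielding the required subgraph $H$ with $0<|V(H)|<4$.
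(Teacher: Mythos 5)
There are two genuine gaps here. First, your candidate pool is too small, and this traces back to a misreading of the conclusion. The bound in the theorem (written $\|V(H)\|<4$, and as $\|H\|\le 3$ in the supporting lemmas) is a bound on the number of \emph{edges} of $H$, not its number of vertices: the subgraphs the paper actually certifies as quasi $5$-contractible in the $K_{1,3}$ case are disjoint unions of two or three edges (four or six vertices), namely $\{aa_0,bb_0,cc_0\}$ in Lemma \ref{lem6}, $\{aa_0,bb_0\}$ in Lemma \ref{lem7}, and $\{aa_0,xx_2\}$ in Lemma \ref{lem8}. Since $G$ is contraction critical, no single edge works, and in the configurations that survive (structures $3$--$5$ of Figure \ref{fig2}) the paper never exhibits a connected $3$-vertex contractible subgraph at all. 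Your plan restricts attention to the three triangles $\{x,a,\beta\}$ and three paths $\beta x\gamma$ through $x$; in fact the triangles do not even exist here, because $b,c,d$ are the leaves of the $K_{1,3}$ and $x$ is adjacent to the centre $a$, so $\{x,a,\beta\}$ is a triangle only if $x\beta$ and $a\beta$ are both edges --- which they are --- but then Lemma \ref{lem2} already forbids $K_3$ inside $N(x)$ only, not this triangle; still, nothing in your argument suggests these six subgraphs suffice, and the paper's own analysis strongly indicates they do not.

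Second, even granting the candidate pool, the entire content of the proof is deferred. Your alternative (A)/(B) dichotomy is essentially the paper's Lemma \ref{lem3} plus the trivial correspondence between cuts of $G/H$ containing the contracted vertex and supersets of $V(H)$ in $G$; that part is fine but easy. The hard part --- showing that the $5$-cuts (for edges) or $6$-cuts (for larger $H$) witnessing failure cannot coexist for all candidates --- is exactly where the paper spends its effort: it intersects the two $5$-cuts $T_1,T_2$ arising from contracting $xx_1$ and $xx_2$, runs a counting argument on the nine cells of the resulting partition to pin down one of five explicit local structures, and then handles each structure by a bespoke contraction (Lemmas \ref{lem4}--\ref{lem8}). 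Your proposal names this step (``the bookkeeping \dots is the technical heart'') but does not carry it out, and the vague appeal to ``fragment intersection lemmas and submodularity'' does not substitute for it: submodularity of the boundary function gives cuts of order at most $\kappa(G)$, not the order-$\le 3$ cuts you claim, without additional case-specific information about where $x_1,\dots,x_4$ and their neighbours sit. As written, the proposal is a plausible outline that both targets the wrong class of subgraphs and omits the argument that would have to do the work.
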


\begin{thm}\label{thm2}
Let $G$ be a contraction critically quasi $5$-connected graph on at least $14$ vertices. If there is a vertex $x\in V_{4}(G)$ such that $G[N_{G}(x)]\cong C_{4}$, then $G$ has a quasi $5$-contractible subgraph $H$ such that $0<\|V(H)\|<4$.
\end{thm}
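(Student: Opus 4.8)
The whole argument rests on a structural description of the $4$-cuts of $G$. Because $G$ is quasi $5$-connected it is $4$-connected and has no nontrivial $4$-cut, and since $|V(G)|\ge 14$ the removal of any $4$-cut $T$ leaves at least $10$ vertices; a short case check on the possible component sizes then shows that $G-T$ must consist of one connected component $C$ together with a single isolated vertex $v$, so that $N_{G}(v)=T$ and $v\in V_{4}(G)$. Hence \emph{every $4$-cut of $G$ is the neighbourhood of a $4$-vertex}; call this fact $(\star)$. I would prove $(\star)$ first, as it is the engine of everything below. The second preliminary step is to translate contractibility into a separator condition: for a connected subgraph $H$ with contracted vertex $h$, a cut of $G/H$ either avoids $h$ (and is then already a cut of $G$) or contains $h$ (and then corresponds to a separating set of $G$ containing $V(H)$). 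From this one reads off that $G/H$ is $4$-connected if and only if $G$ has no separating set $S\supseteq V(H)$ with $|S|\le |V(H)|+2$, and that any $4$-cut of $G/H$ \emph{not} containing $h$ is automatically trivial by $(\star)$.

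Write $N_{G}(x)=\{a,b,c,d\}$ with $G[N_{G}(x)]$ the $4$-cycle $a\,b\,c\,d\,a$, so that $xab,\ xbc,\ xcd,\ xda$ are triangles while $ac,bd\notin E(G)$. The primary candidates for $H$ are these four triangles. Using $(\star)$ and the cycle structure I would prove the clean criterion: \emph{the triangle $xuu'$ (with $uu'$ an edge of the $C_{4}$) contracts to a $4$-connected graph if and only if $u$ and $u'$ have no common neighbour of degree $4$ other than $x$.} Indeed, a $4$-element separator containing $\{x,u,u'\}$ would be $N_{G}(w)$ for some $w$ adjacent to all three, which the non-adjacencies $ac,bd\notin E(G)$ forbid; and a $5$-element separator containing $\{x,u,u'\}$ must, after deleting $x$, reduce to a $4$-cut $N_{G}(w)=\{u,u'\}\cup\{p,q\}$ with $w\in V_{4}(G)\setminus\{x\}$ a common neighbour of $u$ and $u'$, the alternative (articulation) configuration being ruled out because the two remaining rim vertices are joined by edges of the $C_{4}$.

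Next I would show that the four triangles cannot all be blocked. A common degree-$4$ neighbour of a consecutive pair must lie outside $\{x\}\cup N_{G}(x)$, since within $N_{G}(x)$ no vertex is adjacent to both ends of a rim edge; so blocking all four triangles produces degree-$4$ vertices $v_{ab},v_{bc},v_{cd},v_{da}$ outside the wheel. Analysing their overlaps with $(\star)$, no such vertex can be adjacent to all of $a,b,c,d$ (that would make it a twin of $x$ and $\{a,b,c,d\}$ a nontrivial $4$-cut, contradicting quasi $5$-connectedness), which sharply constrains the possible coincidences among the $v$'s and forces them into a rigid pattern around the $C_{4}$. In the genuinely blocked configurations I would instead contract a suitable small subgraph—either a diagonal path $a\,x\,c$ or the order-four subgraph induced by $\{x,u,u',v_{uu'}\}$ (so that $\|V(H)\|=3<4$)—whose separators are again controlled by $(\star)$. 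Throughout, the hypothesis that $G$ is contraction critically quasi $5$-connected guarantees that no single edge already works, so a genuinely multi-vertex $H$ is both necessary and, by the above, available.

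The main obstacle is the second half of quasi $5$-connectedness for the chosen $H$: ruling out a \emph{nontrivial} $4$-cut of $G/H$ through the new vertex $h$. Such a cut pulls back to a separating set $S'=V(H)\cup\{s_{1},s_{2},s_{3}\}$ of $G$ (of size $5$ or $6$) whose two sides each contain at least two vertices. The plan is to collapse this large separator to a genuine $4$-cut of $G$: the degree-$4$ vertex $x$ has all its neighbours in $\{a,b,c,d\}$, and the rim edges $bc,cd,da$ force the vertices of $N_{G}(x)\setminus V(H)$ onto one prescribed side, after which deleting the redundant vertices of $S'$ yields a $4$-cut to which $(\star)$ applies; the isolated vertex it produces must then be a twin of $x$ or of one of $a,b$, contradicting either quasi $5$-connectedness or the choice of $H$. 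The bound $|V(G)|\ge 14$ enters precisely here, ensuring that both sides stay large enough that every $4$-cut met along the way is forced by $(\star)$ to isolate a single vertex. Executing this reduction over the several positions of $s_{1},s_{2},s_{3}$ and over the degrees of $a,b,c,d$ is where the bulk of the casework—and the real difficulty—lies.
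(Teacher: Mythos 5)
Your preliminaries are sound and genuinely useful: the observation $(\star)$ that in a quasi $5$-connected graph on at least $14$ vertices every $4$-cut is the neighbourhood of a degree-$4$ vertex is correct, and your criterion for when $G/\triangle xuu'$ is $4$-connected (no common degree-$4$ neighbour of $u,u'$ other than $x$, using that the two remaining rim vertices are adjacent so that deleting $x$ from a $5$-separator still leaves a cut) checks out. But the argument has two genuine gaps. First, the claim that the four triangles cannot all be blocked is only gestured at: you assert that analysing the overlaps of $v_{ab},v_{bc},v_{cd},v_{da}$ ``forces them into a rigid pattern'' without carrying out that analysis, and the fallback subgraphs you name for the blocked case (the path $a\,x\,c$, or the four-vertex induced subgraph on $\{x,u,u',v_{uu'}\}$, which has five edges and so does not even meet the size bound the theorem demands) are never shown to be contractible. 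Second, and more seriously, $4$-connectivity of $G/H$ is only half of quasi $5$-contractibility; you explicitly defer the exclusion of a \emph{nontrivial} $4$-cut of $G/H$ through the contracted vertex to unexecuted ``bulk casework.'' That step is where essentially all of the paper's work lies (Lemmas \ref{lem9} and \ref{lem10}), and it is not a routine reduction: the paper shows that in some of the arising configurations the triangle $\triangle xx_3x_4$ is \emph{not} the right subgraph, and one must instead contract two disjoint edges (e.g.\ $\{bb_1, xx_3\}$) or three disjoint edges; a plan built solely around the four triangles at $x$ cannot anticipate this, so the deferred casework is not merely tedious but would force a change of candidate $H$.

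For comparison, the paper takes a different route to locate the relevant structures: since $G$ is contraction critical, each $G/xx_i$ has a nontrivial $4$-cut, and intersecting the fragments of the cuts for $xx_1$ and $xx_2$ (a Mader-style grid argument, using $|V(G)|\ge 14$ to bound the cell sizes) pins $G$ down to one of two explicit local configurations, each of which is then handled by contracting a tailored small subgraph. Your $(\star)$-based classification of $4$-cuts could plausibly streamline parts of that analysis, but as written the proposal stops short of the theorem.
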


\section{Preliminaries}

\begin{lem}[\cite{Dong}]\label{lem1}
Let $G$ be a $5$-connected graph, then there exists an edge $e\in E(G)$ such that $G/e$ is quasi $5$-connected.
\end{lem}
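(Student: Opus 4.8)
The plan is to argue by contradiction, exploiting the minimality of a fragment together with the submodularity of the vertex-neighbourhood function. First I would record two easy reductions. For any edge $e=xy$ of a $5$-connected graph $G$ with $|V(G)|\ge 6$, the graph $G/e$ is automatically $4$-connected: a cut of $G/e$ of size at most $3$ either misses $\overline{xy}$, and is then already a cut of $G$ of size at most $3$, or contains $\overline{xy}$, in which case replacing $\overline{xy}$ by $\{x,y\}$ yields a cut of $G$ of size at most $4$; both contradict $\kappa(G)=5$. Hence $G/e$ fails to be quasi $5$-connected only if it has a \emph{nontrivial} $4$-cut $\overline T$, and since $\overline T$ must contain $\overline{xy}$ (otherwise it is a $4$-cut of $G$), the set $T:=(\overline T\setminus\{\overline{xy}\})\cup\{x,y\}$ is a $5$-cut of $G$ with $x,y\in T$ for which $G-T=G/e-\overline T$ splits into two sides each of size at least $2$. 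Call such a $5$-cut \emph{nontrivial}. Thus it suffices to find an edge whose two ends lie together in no nontrivial $5$-cut.

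Suppose, for contradiction, that every edge of $G$ has both ends in some nontrivial $5$-cut. Then $G$ has a nontrivial $5$-cut, and among all fragments $F$ arising from a nontrivial $5$-cut $T$ (so $N(F)=T$, $|T|=5$, $|F|\ge 2$, and $|\overline F|\ge 2$ where $\overline F:=V(G)\setminus(T\cup F)$) I choose one with $|F|$ minimum; here $N(F)=N(\overline F)=T$ because $\kappa(G)=5$. I claim $F$ contains an edge. If not, every $v\in F$ has $N(v)\subseteq T$, whence $\deg v=5$ and $N(v)=T$; but then for $|F|\ge 3$ the set $F\setminus\{v\}$ is a strictly smaller nontrivial fragment with the same cut $T$, contradicting minimality, so only the degenerate case $|F|=2$ with $F$ independent survives, which I would dispose of directly from $\delta(G)\ge 5$ and the structure of $T$. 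Hence I may fix an edge $e'=uv$ with $u,v\in F$, and the contradiction hypothesis supplies a nontrivial $5$-cut $T'\ni u,v$ with complementary fragments $Y,Z$, $N(Y)=N(Z)=T'$ and $|Y|,|Z|\ge 2$.

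The core of the argument is then the standard corner analysis applied to the two crossing fragments. Submodularity of $X\mapsto|N(X)|$ gives
\[
|N(F\cap Y)|+|N(F\cup Y)|\ \le\ |N(F)|+|N(Y)|\ =\ 10,
\]
together with the analogous inequalities for the corners $F\cap Z$, $\overline F\cap Y$, $\overline F\cap Z$. Whenever a corner and its opposite side are both nonempty, the corner has neighbourhood of size at least $5$, so the inequality is forced to equality and the corner is itself a fragment with a $5$-element cut. Because $u,v\in F\cap T'$ lie in neither $Y$ nor $Z$, any such corner contained in $F$, say $F\cap Y$, omits $u$ and $v$ and hence satisfies $|F\cap Y|\le|F|-2<|F|$; if it is also nontrivial, this contradicts the minimal choice of $F$. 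The proof therefore reduces to showing that at least one corner inside $F$ can be made nontrivial, and I expect the main obstacle to lie exactly here: one must run through the cases in which some corner is empty or a single vertex—so that the submodular inequality is slack or the extracted fragment is trivial—and in each of these rule the configuration out using $\delta(G)\ge 5$ and a lower bound on $|V(G)|$, which is the delicate bookkeeping that drives the whole proof.
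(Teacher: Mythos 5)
A preliminary remark: the paper does not prove this lemma at all --- it is quoted from reference \cite{Dong} (a thesis), so there is no in-paper proof to compare your attempt against; what follows is an assessment of your argument on its own terms. Your reduction is correct and cleanly done: for $5$-connected $G$ the contraction $G/xy$ is always $4$-connected, and it fails to be quasi $5$-connected precisely when $G$ has a $5$-cut $T$ with $x,y\in T$ whose complementary components split into two sides of size at least $2$. So the lemma is indeed equivalent to finding an edge whose two ends do not both lie in such a ``nontrivial'' $5$-cut, and the minimal-fragment-plus-submodularity framework you set up is the standard machinery for statements of this kind.

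The gap is that everything after the setup is deferred, and what is deferred is the proof. Concretely, there are two places where your plan, as described, cannot be executed. (a) In the independent-fragment case you correctly force $|F|=2$ with $F=\{u,w\}$ and $N(u)=N(w)=T$, but your claim that this is killed ``directly from $\delta(G)\ge 5$ and the structure of $T$'' cannot be right: such configurations genuinely occur in $5$-connected graphs (two nonadjacent vertices of $K_{5,n}$, say), so no degree argument excludes them; one must invoke the hypothesis that every edge --- say an edge $uy$ with $y\in T$ --- lies in a nontrivial $5$-cut and run a further crossing argument, which meets the same degeneracies as in (b). (b) Your final contradiction needs a corner $F\cap Y$ or $F\cap Z$ of size at least $2$, but since $u,v\in F\cap T'$ we have $|F\cap Y|+|F\cap Z|\le |F|-2$. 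Hence whenever the minimal fragment satisfies $|F|\le 3$ --- exactly the situation that minimality pushes you toward and that you cannot exclude --- both corners inside $F$ are empty or singletons, and the mechanism ``extract a smaller nontrivial fragment inside $F$'' provably cannot run; for instance, if $F=\{u,v\}$ then $F\subseteq T'$ and there is no usable corner on the $F$ side at all. These small-fragment cases are not bookkeeping; they require arguments of a different shape (working with the corners in $\overline{F}$, exploiting how $T$ and $T'$ intersect, or a smarter choice of the cut $T'$), and they are the actual content of the lemma. As it stands, the proposal is a correct reduction plus a correct framework with the theorem-proving part missing, so it cannot be accepted as a proof.
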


\begin{lem}\label{lem2}
Let $G$ be a contraction critically quasi $5$-connected graph, $x\in V(G)$. If $d(x)=4$, then $G[N(x)]$ contains no $K_{3}$-subgraph.
\end{lem}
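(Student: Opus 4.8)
The plan is to proceed by contradiction. Suppose $d(x)=4$ and $G[N(x)]$ contains a triangle; writing $N(x)=\{a,b,c,d\}$ we may assume the triangle is $\{a,b,c\}$, so $G[\{x,a,b,c\}]\cong K_4$ and $d$ is the unique neighbour of $x$ off the triangle. Since $x$ has degree $4$ we have $\kappa(G)=4$ (and we may assume $G\neq K_5$, so that $N(x)$ is a genuine $4$-cut with $\{x\}$ as a singleton side). I would first record the structural meaning of quasi $5$-connectedness: because $G$ has no nontrivial $4$-cut, every $4$-cut $T$ leaves at most one component of order $\geq 2$ in $G-T$, hence at least one singleton component $z$; as $d(z)\geq\kappa(G)=4=|T|$ this forces $N(z)=T$ and $d(z)=4$. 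Call this fact $(\star)$. Second, contraction criticality means $G/uv$ is not quasi $5$-connected for every edge $uv$; by the usual analysis of contracting an edge of a $4$-connected graph this says that for each edge $uv$ \emph{either} $G$ has a $4$-cut containing both $u$ and $v$, \emph{or} $G$ has a $5$-cut $T\supseteq\{u,v\}$ splitting $G-T$ into two parts of order $\geq 2$.

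Contracting a triangle edge is uninformative: $a$ and $b$ are both adjacent to $x$, so in $G/ab$ the vertex $x$ has degree $3$ and $G/ab$ fails to be $4$-connected for a trivial reason. The informative contractions are therefore those incident with $x$, and by symmetry I would study $e=xa$. Since $b$ and $c$ are common neighbours of $x$ and $a$, contracting $xa$ drops $d(b)$ and $d(c)$ each by one, so if either equals $4$ then $G/xa$ is again trivially not $4$-connected; the degrees inside the cluster $\{x,a,b,c\}$ must thus be tracked. The key auxiliary observation I would establish is that $G$ has no two \emph{adjacent} degree-$4$ vertices with the same remaining three neighbours, since those three common neighbours would then form a $3$-cut, contradicting $\kappa(G)=4$.

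Applying criticality to $xa$ gives two cases. In \textbf{Case A}, $G/xa$ is not $4$-connected, so $G$ has a $4$-cut $T\ni x,a$; by $(\star)$ there is a singleton $z$ with $N(z)=T$, and $z\sim x$ forces $z\in\{b,c,d\}$ with $z\sim a$. If $z=b$ then $b\sim c$ gives $c\in T$ and $N(b)=\{x,a,c,t\}$; the twin observation forbids $t=d$, so $b\not\sim d$, and re-examining the contraction of $xb$ (whose common neighbours are $a,c$) feeds the configuration into the fragment analysis below. The cases $z=c$ and $z=d$ (the latter requiring $d\sim a$) are analogous.

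\textbf{Case B}, where $G/xa$ is $4$-connected but carries a nontrivial $4$-cut, is the heart of the argument. Such a cut must use the branch vertex $\overline{xa}$ — otherwise it would already be a nontrivial $4$-cut of $G$ — so it lifts to a $5$-cut $T=\{x,a,p,q,r\}$ of $G$ whose complement splits into fragments $F_1,F_2$ with $|F_1|,|F_2|\geq 2$. The crux is to pin down $b,c,d$ relative to this separation: the adjacencies $a\sim b$, $a\sim c$, $b\sim c$ together with the fact that $x$ has only the neighbours $a,b,c,d$ prevent the triangle from straddling the cut, essentially confining $b,c$ to one fragment. I would then extract from $T\cup F_i$ a $4$-cut to which $(\star)$ applies and derive a contradiction with the triviality of $4$-cuts, or else force one fragment to be too small. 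Carrying out this fragment bookkeeping cleanly — and uniting it with the degree constraints and the misaligned subcases of Case A — is the step I expect to be the main obstacle; everything else follows routinely from $(\star)$ and the twin/$3$-cut observation.
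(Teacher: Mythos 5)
There is a genuine gap: what you have written is a plan whose decisive step is explicitly left open (``Carrying out this fragment bookkeeping cleanly \dots is the step I expect to be the main obstacle''), and the plan also picks the wrong edge to contract, which is why the bookkeeping looks hard. Writing $N(x)=\{a,b,c,d\}$ with triangle $\{a,b,c\}$, the edge that the hypothesis is built to exploit is $xd$, the edge to the unique neighbour \emph{outside} the triangle --- not $xa$. The point is this: if $T'$ is any cut of $G/xd$ containing $\overline{xd}$, then $T'-\{\overline{xd}\}+\{d\}$ is already a cut of $G$ of the \emph{same} cardinality as $T'$, because after deleting it the vertex $x$ has all of its surviving neighbours inside the clique $\{a,b,c\}$, so those neighbours lie in a single component and $x$ cannot reconnect anything. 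Applying criticality to $xd$ then finishes the lemma in two lines: a $3$-cut of $G/xd$ would yield a $3$-cut of $G$ (contradicting $4$-connectedness), and a nontrivial $4$-cut of $G/xd$ would yield a nontrivial $4$-cut of $G$ (the fragments only grow when lifted, with $x$ at worst forming an extra singleton component), contradicting quasi $5$-connectedness. This is exactly the paper's proof, and it needs neither your fact $(\star)$ nor any analysis of where $b,c,d$ sit relative to a $5$-cut.

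Your choice of $e=xa$ forfeits precisely this mechanism: after lifting a cut $T\ni x,a$ of $G/xa$, the vertex $x$ still has the neighbours $b,c,d$ available, and since $d$ need not be adjacent to $b$ or $c$, deleting $x$ from $T$ may reconnect the component of $d$ to that of $\{b,c\}$; likewise $a$'s neighbourhood is unconstrained, so $a$ cannot be dropped either. That is why your Case~B degenerates into unresolved fragment analysis, and why Case~A only ``feeds into'' it rather than closing. Until that analysis is actually carried out (and it is not clear it can be, at least not without re-discovering the $xd$ contraction), the argument is not a proof. A secondary, minor point: your $(\star)$ and the reduction to $G\neq K_5$ quietly assume $G$ is large, whereas the lemma as stated carries no order hypothesis; the paper's route avoids needing $(\star)$ altogether.
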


\begin{proof}
By contradiction. Suppose that $N(x)=\{x_{1}, x_{2}, x_{3}, x_{4}\}$ and $G[\{x_{1}, x_{2}, x_{3}\}]\cong K_{3}$. Since $G$ is contraction critically, $G/xx_{4}$ is not quasi $5$-connected and then $\kappa (G/xx_{4})\leq4$ by Lemma \ref{lem1}.
If $\kappa (G/xx_{4})<4$, then $G/xx_{4}$ has a $3$-cut $T^{\prime}$ such that $\overline{xx_{4}}\in T^{\prime}$. Since $G[\{x_{1}, x_{2}, x_{3}\}]\cong K_{3}$, we have $T^{\prime}-\{\overline{xx_{4}}\}+\{x_{4}\}$ forms a $3$-cut of $G$, contradicts with $G$ is quasi $5$-connected.
If $\kappa (G/xx_{4})=4$, then $G/xx_{4}$ has a nontrivial $4$-cut $T^{\prime}$ such that $\overline{xx_{4}}\in T^{\prime}$. Thus $T^{\prime}-\{\overline{xx_{4}}\}+\{x_{4}\}$ is a nontrivial $4$-cut of $G$, a contradiction.
\end{proof}

Let $G$ be a contraction critically quasi $5$-connected graph. By Lemma \ref{lem1}, we know that $\kappa(G)=4$. Furthermore, we have $\delta(G)=4$, otherwise, every $4$-cut of $G$ is nontrivial, contradicting that $G$ is quasi $5$-connected.
By Lemma \ref{lem2}, neighbour sets of all $4$-degree vertices have at most $7$ types in $G$ (see Figure \ref{fig1}). In this paper, we consider that there exists a $4$-degree vertex $x$ has type $6$ or type $7$ in $G$. In other words, we consider that there exists a vertex $x\in V_{4}(G)$ such that $G[N_{G}(x)]\cong K_{1,3}$ or $G[N_{G}(x)]\cong C_{4}$.

\begin{figure}
  \centering
  \includegraphics{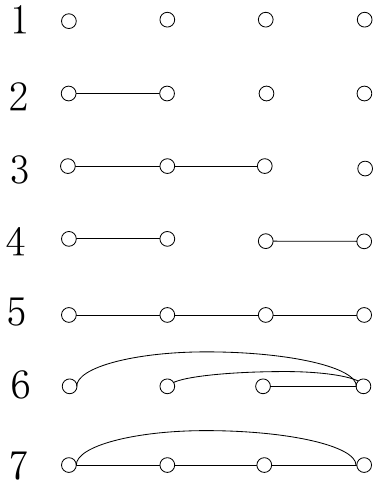}\\
  \caption{The seven neighborhood types}
  \label{fig1}
\end{figure}

\begin{lem}\label{lem3}
Let $G$ be a quasi $5$-connected graph and $xy\in E(G)$. If $\delta(G/xy)\geq4$, then $G/xy$ is $4$-connected.
\end{lem}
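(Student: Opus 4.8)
The plan is to argue by contradiction, exploiting the two features of quasi $5$-connectedness separately: that $G$ is $4$-connected, and that $G$ has no nontrivial $4$-cut. First I would record that $\delta(G/xy)\geq 4$ forces $|V(G/xy)|\geq 5$, so that to establish $4$-connectedness it suffices to rule out a vertex cut of size at most $3$. So suppose, for contradiction, that $T'$ is a cut of $G/xy$ with $|T'|\leq 3$, and split into two cases according to whether the contracted vertex $\overline{xy}$ lies in $T'$.

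If $\overline{xy}\notin T'$, then $T'$ consists of original vertices of $G$, and re-expanding $\overline{xy}$ into the adjacent pair $x,y$ does not change connectivity. I would verify the clean statement that $G-T'$ is disconnected precisely when $G/xy-T'$ is (identifying two adjacent vertices neither creates nor destroys connectivity among the remaining vertices), so $T'$ is a cut of $G$ of size at most $3$, contradicting that $G$ is $4$-connected.

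The substantive case is $\overline{xy}\in T'$. Here I set $T:=(T'\setminus\{\overline{xy}\})\cup\{x,y\}$, so $|T|\leq 4$. Since contraction leaves every edge among $V(G)\setminus\{x,y\}$ untouched, the graphs $G-T$ and $G/xy-T'$ are literally the same graph on the same vertex set; in particular $G-T$ is disconnected. If $|T'|\leq 2$ then $|T|\leq 3$, again contradicting $4$-connectedness, so we may assume $|T'|=3$ and $T$ is a $4$-cut of $G$. Now comes the key step: I would use $\delta(G/xy)\geq 4$ to show that no component of $G/xy-T'$ is a single vertex, for such a vertex $v$ (which is an original vertex, as $v\neq\overline{xy}$) would have all of its neighbours inside $T'$, giving $\deg_{G/xy}(v)\leq|T'|=3$, a contradiction. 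Consequently every component of $G-T$ has at least two vertices, so the components can be partitioned into two parts each of order $\geq 2$, making $T$ a \emph{nontrivial} $4$-cut of $G$ and contradicting the hypothesis that $G$ is quasi $5$-connected.

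I expect the main obstacle to be precisely this last step, namely promoting a plain $4$-cut of $G$ to a nontrivial one: the minimum-degree hypothesis $\delta(G/xy)\geq 4$ is exactly what forbids singleton fragments, and hence is exactly what blocks the ``trivial'' escape that quasi $5$-connectedness would otherwise allow. The small but essential piece of bookkeeping is the observation that the component structures of $G-T$ and $G/xy-T'$ coincide rather than merely correspond, which is what lets the degree bound transfer without loss; I would be careful to state and justify that identification explicitly.
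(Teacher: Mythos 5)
Your proposal is correct and follows essentially the same route as the paper: reduce to a cut $T'$ containing $\overline{xy}$ (otherwise it is a small cut of $G$ itself), lift it to a $4$-cut $T$ of $G$, and use $\delta(G/xy)\geq 4$ to exclude singleton components so that $T$ is nontrivial, contradicting quasi $5$-connectedness. You are merely more explicit than the paper about the case $\overline{xy}\notin T'$ and about the identification of $G-T$ with $G/xy-T'$, which is a welcome clarification rather than a different argument.
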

\begin{proof}
Assume, to the contrary, that $G/xy$ is not $4$-connected, then there exists a $3$-cut $T^{\prime}$ of $G/xy$ such that $\overline{xy}\in T^{\prime}$. Since $\delta(G/xy)\geq4$, each component of $G-T^{\prime}$ has at least $2$ vertices. Hence, $T=T^{\prime}-\overline{xy}+\{x, y\}$ is a $4$-cut of $G$. Since $G$ is quasi $5$-connected, $T$ is nontrivial, a contradiction.
\end{proof}

Let us close this section with some conventions.
In the following, if the graph $G^{\prime}$ obtained from $G$ by contracting several edges $A$, $B$, $C$ to vertices $A^{\prime}$, $B^{\prime}$, $C^{\prime}$, respectively. Then let $F$, $T$, $\overline{F}$ be the sets in $G$ corresponding to $F^{\prime}$, $T^{\prime}$, $\overline{F^{\prime}}$ in $G^{\prime}$. That is, in each of these sets, we replace the vertices $A^{\prime}$, $B^{\prime}$, $C^{\prime}$ by the vertices in the sets $A$, $B$, $C$, respectively. In addition, we agree that in all the figures below, solid lines represent edges that must exist, and dotted lines represent edges that may exist. A solid vertex means that the degree of this vertex is full.

\section{Proof of Theorem \ref{thm1}}
Let $G$ be a quasi $5$-connected graph on at least $14$ vertices. We suppose that there exists a vertex $x\in V_{4}(G)$ and $N_{G}(x)=\{x_{1}, x_{2}, x_{3}, x_{4}\}$.
In Lemma \ref{lem4}, \ref{lem5}, \ref{lem6}, \ref{lem7} and \ref{lem8}, we consider that $G[N_{G}(x)]\cong K_{1,3}$ and $\{x_{1}x_{4}, x_{2}x_{4}, x_{3}x_{4}\}\subseteq E(G)$, then we have $d_{G}(x_{4})\geq5$, for otherwise, $\{x_{1}, x_{2}, x_{3}\}$ forms a $3$-cut of $G$, a contradiction.

\begin{figure}[t]
  \centering
  \includegraphics{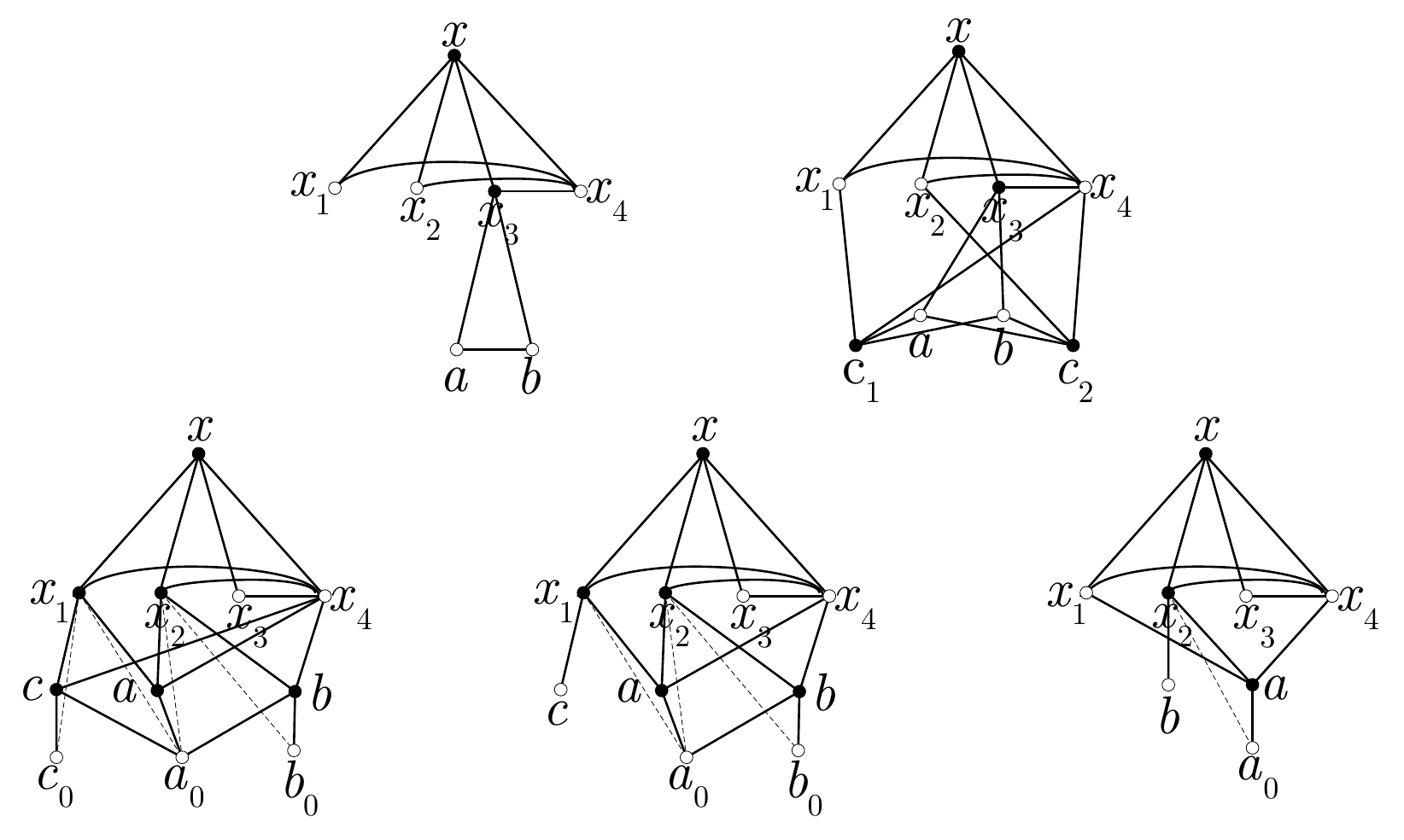}\\
  \caption{The structures of Lemma \ref{lem4}-Lemma \ref{lem8}}\label{fig2}
\end{figure}

\begin{lem}\label{lem4}
Let $G$ be a quasi $5$-connected graph on at least $14$ vertices. If $G$ has the first structure in Figure \ref{fig2}, then $xx_{3}$ is quasi $5$-contractible.
\end{lem}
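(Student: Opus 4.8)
The plan is to establish the two requirements of quasi $5$-connectivity for $G/xx_3$ separately: that it is $4$-connected, and that it admits no nontrivial $4$-cut.

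First I would confirm the degree hypothesis $\delta(G/xx_3)\geq 4$ needed to invoke Lemma \ref{lem3}. Only the common neighbours of $x$ and $x_3$ can lose degree under the contraction. Since $G[N_G(x)]\cong K_{1,3}$ has centre $x_4$, the vertex $x_3$ is adjacent to neither $x_1$ nor $x_2$, so $x_4$ is the unique common neighbour of $x$ and $x_3$; as $d_G(x_4)\geq 5$ we keep $d_{G/xx_3}(x_4)\geq 4$. The contracted vertex $\overline{xx_3}$ is adjacent to $x_1,x_2,x_4$ and to the at least two further neighbours of $x_3$, so $d_{G/xx_3}(\overline{xx_3})\geq 5$, and every remaining vertex retains its degree. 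Thus $\delta(G/xx_3)\geq 4$, and Lemma \ref{lem3} gives that $G/xx_3$ is $4$-connected.

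Next I would assume for contradiction that $T'$ is a nontrivial $4$-cut of $G/xx_3$ and distinguish two cases. If $\overline{xx_3}\notin T'$, then $T'$ is already a set of vertices of $G$; since the edge $xx_3$ lies inside one side of the partition, $G-T'$ is disconnected with both sides still of size at least $2$, so $T'$ is a nontrivial $4$-cut of $G$, contradicting quasi $5$-connectivity. The substantial case is $\overline{xx_3}\in T'$, where $T:=(T'\setminus\{\overline{xx_3}\})\cup\{x,x_3\}$ is merely a $5$-cut of $G$ and no contradiction is immediate. Let $F,\overline{F}$ be the two nontrivial sides of $G-T$ inherited from $T'$. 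The leverage is that $x\in V_4(G)$ has $x_3\in T$, so at most the three vertices $x_1,x_2,x_4$ of $N(x)$ lie outside $T$; if $x$ has neighbours in at most one of $F,\overline{F}$, then $T\setminus\{x\}$ is a $4$-cut of $G$, and since one side retains at least two vertices while the other absorbs $x$, it is nontrivial — a contradiction. The symmetric argument applies to $x_3$. The remaining situation, where both $x$ and $x_3$ have neighbours in $F$ and in $\overline{F}$, is the crux: because $x$ contributes only three external neighbours the fragments are tightly constrained, and I expect to pin down the positions of $x_1,x_2,x_4$ and of the other neighbours of $x_3$ relative to $F$ and $\overline{F}$, then invoke the specific adjacencies of the first structure in Figure \ref{fig2} to exhibit a forbidden $4$-cut of $G$.

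The main obstacle is precisely this last subcase: once neither $x$ nor $x_3$ can be shed from the $5$-cut to reveal a $4$-cut, the contradiction has to come from the extra edges prescribed by the structure, so the argument must combine a careful count of how $N(x)$ and $N(x_3)$ distribute across the two fragments with the adjacency data of the configuration. Verifying that no distribution survives — that every case forces either a smaller cut or a nontrivial $4$-cut of $G$ — is where the real work lies; the degree bookkeeping and the $\overline{xx_3}\notin T'$ case are routine by comparison.
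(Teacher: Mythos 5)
Your setup is sound and matches the paper's: the degree check via the unique common neighbour $x_{4}$, the appeal to Lemma \ref{lem3}, the reduction to a nontrivial $4$-cut $T^{\prime}$ of $G/xx_{3}$ containing $\overline{xx_{3}}$, and the observation that if either $x$ or $x_{3}$ has neighbours in at most one fragment of $G-T$ then it can be deleted from the $5$-cut $T=T^{\prime}-\{\overline{xx_{3}}\}+\{x,x_{3}\}$ to produce a forbidden nontrivial $4$-cut of $G$. But the argument stops exactly where it needs to be finished: you defer the case in which both $x$ and $x_{3}$ meet both fragments to an unspecified ``distribution analysis,'' and that case is the entire content of the lemma. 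As written, this is a genuine gap rather than a routine verification.

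The missing point is that this case cannot occur, for two short reasons tied to the specific structure. First, $N_{G}(x)\setminus\{x_{3}\}=\{x_{1},x_{2},x_{4}\}$ induces a connected graph (the path $x_{1}x_{4}x_{2}$, since $x_{4}$ is the centre of the star $G[N_{G}(x)]$); hence if $x_{4}\notin T$, every neighbour of $x$ outside $T$ lies in the one fragment containing $x_{4}$, so $T-\{x\}$ is already a nontrivial $4$-cut of $G$. This forces $x_{4}\in T$. Second, once $x,x_{4}\in T$ we have $N_{G}(x_{3})\setminus T\subseteq\{a,b\}$, and the edge $ab$ --- the defining feature of the first structure --- prevents $a$ and $b$ from lying in different fragments; hence $T-\{x_{3}\}$ is a nontrivial $4$-cut of $G$. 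Either way the contradiction is immediate, so the case split on the positions of $x_{1},x_{2},x_{4},a,b$ that you anticipate never materialises. Supplying these two observations would complete your proof and make it coincide with the paper's.
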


\begin{proof}
Clearly, $\delta(G/xx_{3})=4$. By Lemma \ref{lem3}, $G/xx_{3}$ is $4$-connected. Suppose, to the contrary, that $xx_{3}$ is not quasi $5$-contractible, then $G/xx_{3}$ has a nontrivial $4$-cut $T^{\prime}$ and $\overline{xx_{3}}\in T^{\prime}$. Then $T=T^{\prime}-\{\overline{xx_{3}}\}+\{x, x_{3}\}$ is a $5$-cut of $G$. If $x_{4}\notin T$, then $T-\{x\}$ is a nontrivial $4$-cut of $G$, a contradiction. Thus, $x_{4}\in T$. Since $N_{G}(x_{3})=\{x, x_{4}, a, b\}$ and $ab\in E(G)$, $T-\{x_{3}\}$ is a nontrivial $4$-cut of $G$, a contradiction.
\end{proof}

\begin{lem}\label{lem5}
Let $G$ be a quasi $5$-connected graph on at least $14$ vertices. If $G$ has the second structure in Figure \ref{fig2}, then $xx_{3}$ is quasi $5$-contractible.
\end{lem}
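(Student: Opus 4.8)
The plan is to follow verbatim the template established in the proof of Lemma~\ref{lem4}, since the conclusion and the ambient contraction are identical and only the local structure around $x_3$ changes. First I would verify that $\delta(G/xx_3)=4$. The only vertices that can lose an edge under the contraction are the common neighbours of $x$ and $x_3$; because $G[N_G(x)]\cong K_{1,3}$ has centre $x_4$, the vertices $x_1,x_2,x_3$ are pairwise nonadjacent, so $x_4$ is the unique common neighbour, and $d_G(x_4)\ge 5$ guarantees it retains degree $\ge 4$. The identified vertex $\overline{xx_3}$ inherits $x_1,x_2,x_4$ together with the remaining neighbours of $x_3$, so it too has degree at least $4$. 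Hence Lemma~\ref{lem3} applies and $G/xx_3$ is $4$-connected.

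Next I would argue by contradiction. If $xx_3$ is not quasi $5$-contractible, then $G/xx_3$ is $4$-connected but not quasi $5$-connected, so it has a nontrivial $4$-cut $T'$. A nontrivial $4$-cut avoiding $\overline{xx_3}$ would be a nontrivial $4$-cut of $G$ itself, contradicting that $G$ is quasi $5$-connected; therefore $\overline{xx_3}\in T'$. Lifting produces a $5$-cut $T=T'-\{\overline{xx_3}\}+\{x,x_3\}$ of $G$ with $x,x_3\in T$, and I would split on whether $x_4\in T$.

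When $x_4\notin T$, I would exploit that $N_G(x)$ is a star centred at $x_4$: the independent neighbours $x_1,x_2$ of $x$ are each adjacent to $x_4$, so any of them lying outside $T$ must lie in the same $T$-fragment as $x_4$. Thus every neighbour of $x$ outside $T$ sits in one fragment, $x$ can be absorbed into that fragment, and $T-\{x\}$ remains a cut; nontriviality of $T'$ together with $\delta(G)=4$ then shows $T-\{x\}$ is a \emph{nontrivial} $4$-cut of $G$, a contradiction.

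The case $x_4\in T$ is the crux, and it is exactly here that the second structure in Figure~\ref{fig2} is used. Now $x,x_3,x_4\in T$, so the only possible neighbours of $x_3$ outside $T$ are its two remaining neighbours $a,b$. Reading off from the second structure the adjacency relation placing $a$ and $b$ together, I would conclude they lie in a single $T$-fragment, whence $T-\{x_3\}$ is a nontrivial $4$-cut and we again reach a contradiction. The main obstacle I anticipate is precisely this last step when $a$ and $b$ are \emph{not} directly joined by an edge, since then they could a priori fall into different $T$-fragments and the one-line cut-shrinking of Lemma~\ref{lem4} breaks down. To rule this out I expect to need a secondary argument: either contracting an alternative edge incident with $x_3$ and rerunning Lemma~\ref{lem3}, or invoking $|V(G)|\ge 14$ together with $\delta(G)=4$ to force one of the putative fragments to be too small to exist, thereby recovering that the outer neighbours of $x_3$ cannot be separated.
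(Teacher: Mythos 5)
Your setup---checking $\delta(G/xx_{3})=4$, invoking Lemma~\ref{lem3} to get $4$-connectivity, lifting a nontrivial $4$-cut $T^{\prime}$ to $T=T^{\prime}-\{\overline{xx_{3}}\}+\{x,x_{3}\}$, and disposing of the case $x_{4}\notin T$ by shrinking to $T-\{x\}$---matches the paper, as does the reduction of the sub-case $ab\in E(G)$ to Lemma~\ref{lem4}. But the entire point of Lemma~\ref{lem5}, as opposed to Lemma~\ref{lem4}, is the sub-case $ab\notin E(G)$, and there your argument stops: you correctly observe that $a$ and $b$ might then fall into different $T$-fragments, and you only speculate that ``a secondary argument'' will rule this out. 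That is a genuine gap, not a routine verification; indeed the split configuration cannot simply be excluded, and the paper instead assumes it and derives a contradiction from it.

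Concretely, the paper's route in that case is: the same cut-shrinking trick applied to $x$ (whose neighbours $x_{3},x_{4}$ already lie in $T$) forces $x_{1}$ and $x_{2}$ into different fragments, and applied to $x_{3}$ it forces the same for $a$ and $b$; after relabelling, $\{x_{1},a\}\subset F$, $\{x_{2},b\}\subset\overline{F}$, and $T=\{x,x_{3},x_{4},c_{1},c_{2}\}$. Since $|V(G)|\geq 14$, one fragment, say $F$, has at least three vertices, and the paper then produces the $3$-cut $\{x_{4},x_{1},a\}$ (or $\{x_{4},x_{2},b\}$) separating $F-\{x_{1},a\}$ from the rest, contradicting $\kappa(G)=4$. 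Your second suggested repair (``invoking $|V(G)|\geq 14$ together with $\delta(G)=4$ to force one of the putative fragments to be too small'') points vaguely in this direction, but the actual mechanism---locating all four of $x_{1},x_{2},a,b$ relative to $F$ and $\overline{F}$ and then extracting a $3$-cut---is absent, and your stated aim of ``recovering that the outer neighbours of $x_{3}$ cannot be separated'' is not what the proof does: the separation is accepted and refuted. As written, the proposal does not establish the lemma.
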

\begin{proof}
If $ab\in E(G)$, $G$ has the first structure in Figure \ref{fig2}. By Lemma \ref{lem4}, the edge $xx_{3}$ is quasi $5$-contractible. So we suppose that $ab\notin E(G)$. By Lemma \ref{lem3}, $G/xx_{3}$ is $4$-connected. If $G/xx_{3}$ is not quasi $5$-connected, then
$G/xx_{3}$ has a nontrivial $4$-cut $T^{\prime}$ and $\overline{xx_{3}}\in T^{\prime}$. Let $F^{\prime}$ be a $T^{\prime}$-fragment, $\overline{F^{\prime}}=G/xx_{3}-T^{\prime}-F^{\prime}$. Thus, $T=T^{\prime}-\{\overline{xx_{3}}\}+\{x, x_{3}\}$ and $x_{4}\in T$. It follows that $x_{1}$ and $x_{2}$ are one in $F$ and the other in $\overline{F}$, and so are $a$ and $b$. Without loss of generality, we can assume that $\{x_{1}, a\}\subset F$ and $\{x_{2}, b\}\subset\overline{F}$. Thus, $T=\{x, x_{3}, x_{4}, c_{1}, c_{2}\}$, and thus $N_{G}(\{x, x_{3}, c_{1}, c_{2}\})\cap F=\{x_{1}, a\}$ and $N_{G}(\{x, x_{3}, c_{1}, c_{2}\})\cap\overline{F}=\{x_{2}, b\}$. Since $|V(G)|\geq14$, $|F|\geq3$ or $|\overline{F}|\geq3$. So $\{x_{4}, x_{1}, a\}$ or $\{x_{4}, x_{2}, b\}$ are $3$-cut of $G$, which is absurd.
\end{proof}

By Lemma \ref{lem4} and Lemma \ref{lem5}, $G$ has no the first and the second structure in Figure \ref{fig2} if $G$ is a contraction critically quasi $5$-connected graph on at least $14$ vertices.

\begin{lem}\label{lem6}
Let $G$ be a quasi $5$-connected graph on at least $14$ vertices. If $G$ has the third structure in Figure \ref{fig2}, then the graph $G^{\prime}$ obtained from $G$  by contracting $A:=\{a, a_{0}\}$, $B:=\{b, b_{0}\}$, $C:=\{c, c_{0}\}$ to vertices $A^{\prime}$, $B^{\prime}$, $C^{\prime}$, respectively, is still quasi $5$-connected.
\end{lem}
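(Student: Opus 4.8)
The plan is to argue by contradiction, showing that any cut of $G^{\prime}$ that witnesses a failure of quasi $5$-connectedness can be lifted back to a forbidden cut of $G$. Since ``quasi $5$-connected'' means ``$4$-connected with no nontrivial $4$-cut,'' I must rule out two things in $G^{\prime}$: a cut $T^{\prime}$ with $|T^{\prime}|\le 3$, and a nontrivial $4$-cut $T^{\prime}$. As a preliminary step I would check that $\delta(G^{\prime})\ge 4$, which places us in the regime of Lemma \ref{lem3}: since $G$ is $4$-connected, every vertex has degree at least $4$, and for a contracted pair such as $A=\{a,a_{0}\}$ (with $aa_{0}\in E(G)$) the new vertex satisfies $d_{G^{\prime}}(A^{\prime})=d_{G}(a)+d_{G}(a_{0})-2-t$, where $t$ is the number of common neighbours of $a$ and $a_{0}$; using the prescribed adjacencies of the third structure together with the absence of a $3$-cut in $G$, one verifies that $t$ is small enough to keep $d_{G^{\prime}}(A^{\prime})\ge 4$, and that no old vertex drops below degree $4$.

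Next I would set up the lifting. Suppose $T^{\prime}$ is such a bad cut, let $F^{\prime}$ be a $T^{\prime}$-fragment and $\overline{F^{\prime}}=G^{\prime}-T^{\prime}-F^{\prime}$. Because each of $A^{\prime},B^{\prime},C^{\prime}$ is a single vertex of $G^{\prime}$, its two-element preimage lies entirely inside one of $T^{\prime}$, $F^{\prime}$, $\overline{F^{\prime}}$; hence replacing $A^{\prime},B^{\prime},C^{\prime}$ by the pairs $A,B,C$ wherever they occur yields genuine sets $T,F,\overline{F}$ in $G$ with $T$ separating $F$ from $\overline{F}$ and $|T|=|T^{\prime}|+|\{A^{\prime},B^{\prime},C^{\prime}\}\cap T^{\prime}|$. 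The argument then splits according to how many of $A^{\prime},B^{\prime},C^{\prime}$ lie in $T^{\prime}$. When none do, $T=T^{\prime}$ is already a cut of $G$ of the same size: if $|T^{\prime}|\le 3$ this contradicts $\kappa(G)=4$, and if $|T^{\prime}|=4$ is nontrivial then $T$ is a nontrivial $4$-cut of $G$, contradicting quasi $5$-connectedness (here, as in the proof of Lemma \ref{lem5}, I would invoke $|V(G)|\ge 14$ to guarantee both fragments are large). The substantive work is in the cases where one, two, or all three contracted vertices belong to $T^{\prime}$.

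The heart of the proof, and the step I expect to be the main obstacle, is the structural reduction in these substantive cases. Whenever a pair such as $A=\{a,a_{0}\}$ lies in $T$, the cut $T$ is one larger than $T^{\prime}$, so a naive lift of a $3$-cut of $G^{\prime}$ can be as large as a $6$-set in $G$ and is not immediately forbidden. The idea is to exploit the largely prescribed neighbourhoods of $a,a_{0},b,b_{0},c,c_{0}$ in the third configuration: for each such pair one can show that a proper subset of $T$ already separates a nonempty part of a fragment from the rest of $G$, exactly in the manner of Lemma \ref{lem5}, where a subset of the lifted cut together with a large fragment produced a genuine $3$-cut. Carrying this out pair by pair, and using $|V(G)|\ge 14$ to keep the remaining fragments nonempty, should in every case collapse $T$ to either a $3$-cut of $G$ (contradicting $\kappa(G)=4$) or a nontrivial $4$-cut of $G$ (contradicting quasi $5$-connectedness). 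The delicate part is the simultaneous bookkeeping: tracking, when several of $A,B,C$ sit in $T$ at once, precisely which vertices can be shed and verifying that the two sides stay large enough throughout, rather than doing so for a single contracted edge as in the earlier lemmas.
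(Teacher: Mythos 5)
Your overall strategy --- check $\delta(G^{\prime})\ge 4$, lift a bad cut $T^{\prime}$ of $G^{\prime}$ back to a set $T$ in $G$, and split according to $|T^{\prime}\cap\{A^{\prime},B^{\prime},C^{\prime}\}|$ --- is exactly the skeleton of the paper's proof. But what you have written is a plan, not a proof: every place where the lemma actually gets proved is replaced by a statement of intent. You write that ``one verifies that $t$ is small enough,'' that a proper subset of $T$ ``can be shown'' to separate, and that the pair-by-pair reduction ``should in every case collapse $T$'' to a forbidden cut. None of these steps is carried out, and they are not routine: they depend on the specific adjacencies of the third structure (e.g.\ that $a,b,c$ are all adjacent to $x_{4}$ and to $x$ through $x_{1},x_{2}$, and that $a_{0},b_{0},c_{0}$ are the respective fourth neighbours). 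For instance, in the case $\{A^{\prime},B^{\prime}\}\subseteq T^{\prime}$ the paper must first locate $c,c_{0}$ in a fragment, use $cx_{1},cx_{4}\in E(G)$ to force $x_{1},x_{4}\in F\cup T$, then split on the position of $x_{2}$ and $x_{3}$, and in one branch the contradiction is a $3$-cut $\{a_{0},b_{0},x_{4}\}$ rather than a shrunken $4$-cut; in the final case $T^{\prime}\cap\{A^{\prime},B^{\prime},C^{\prime}\}=\{A^{\prime}\}$ one must further argue that $B^{\prime}$ and $C^{\prime}$ fall into different fragments before the cut $T-\{a,x\}+\{x_{i}\}$ appears. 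Your proposal gives no indication of which vertex is shed in which configuration, nor why the resulting smaller cut is nontrivial, and this bookkeeping is the entire content of the lemma.

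A second, smaller gap: the minimum-degree check is not a matter of bounding the number $t$ of common neighbours in general. The only threat is that $a_{0}$ (or $b_{0}$, $c_{0}$) has degree $4$ with two of its neighbours inside $\{a,b,c,b_{0},c_{0}\}$; the paper excludes the two dangerous neighbourhoods $N_{G}(a_{0})=\{a,b,c,b_{0}\}$ and $N_{G}(a_{0})=\{a,b,c,c_{0}\}$ by exhibiting the nontrivial $4$-cut $\{x_{3},x_{4},b_{0},c_{0}\}$ (using $|V(G)|\ge 14$). Your formula-based phrasing does not identify these configurations or the cut that rules them out, so even the preliminary step is incomplete as stated.
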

\begin{proof}
If $N_{G}(a_{0})=\{a, b, c, b_{0}\}$, then $\{x_{3}, x_{4}, b_{0}, c_{0}\}$ forms a nontrivial $4$-cut of $G$ since $|V(G)|\geq14$. So $N_{G}(a_{0})\neq\{a, b, c, b_{0}\}$. Similarly, $N_{G}(a_{0})\neq\{a, b, c, c_{0}\}$. Therefore, $\delta(G^{\prime})=4$.

Firstly, we show that $G^{\prime}$ is $4$-connected. Assume, to the contrary, that $G^{\prime}$ is not $4$-connected, then $G^{\prime}$ has a cut $T^{\prime}$ such that $|T^{\prime}|\leq3$.
Clearly, $|T^{\prime}|\leq2$ is impossible. So $|T^{\prime}|=3$, and, $|T^{\prime}\cap\{A^{\prime}, B^{\prime}, C^{\prime}\}|\geq2$.
However, $T^{\prime}\cap\{A^{\prime}, B^{\prime}, C^{\prime}\}\neq\{A^{\prime}, B^{\prime}\}$, for otherwise, $T-\{b\}$ is a nontrivial $4$-cut of $G$.
Similarly, we have $T^{\prime}\cap\{A^{\prime}, B^{\prime}, C^{\prime}\}\neq\{A^{\prime}, C^{\prime}\}$. Therefore, the set $T^{\prime}\cap\{A^{\prime}, B^{\prime}, C^{\prime}\}$ is  $\{B^{\prime}, C^{\prime}\}$ or $\{A^{\prime}, B^{\prime}, C^{\prime}\}$. In the former, $T-\{b\}$ is a nontrivial $4$-cut of $G$ or $\{a_{0}, b_{0}, c_{0}\}$ forms a $3$-cut of $G$. In the latter, $\{a_{0}, b_{0}, c_{0}\}$ forms a $3$-cut of $G$. Both contradict that $G$ is a quasi $5$-connected graph. This proves that $G^{\prime}$ is $4$-connected.

Now we show that $G^{\prime}$ is quasi $5$-connected. Otherwise, there exists a nontrivial $4$-cut $T^{\prime}$ of $G^{\prime}$ and $|T^{\prime}\cap\{A^{\prime}, B^{\prime}, C^{\prime}\}|\geq1$. Let $F^{\prime}$ be a $T^{\prime}$-fragment and $\overline{F^{\prime}}=G^{\prime}-T^{\prime}-F^{\prime}$.

{\bf Claim 1.} $\{A^{\prime}, B^{\prime}, C^{\prime}\}\nsubseteq T^{\prime}$.

By contradiction, then $|T|=7$ and $T\supset\{a, a_{0}, b, b_{0}, c, c_{0}\}$. Whether $x\in T$ or $x\notin T$, we both have $N_{G}(\{a, b, c\})\cap F=\emptyset$ or $N_{G}(\{a, b, c\})\cap \overline{F}=\emptyset$, and, thus, $T-\{a, b, c\}$ forms a nontrivial $4$-cut of $G$, a contradiction.
This proves Claim $1$.

{\bf Claim 2.} $\{A^{\prime}, B^{\prime}\}\nsubseteq T^{\prime}$.

For otherwise, $|T|=6$ and $T\supset\{a, a_{0}, b, b_{0}\}$. Without loss of generality, we assume that $\{c, c_{0}\}\subseteq F$. Since $cx_{1}\in E(G)$ and $cx_{4}\in E(G)$, $\{x_{1}, x_{4}\}\subset F\cup T$. If $x_{2}\in F\cup T$, then $N_{G}(\{a, b\})\subset F\cup T$, and, thus, $T-\{a, b\}$ forms a nontrivial $4$-cut of $G$. So $x_{2}\in\overline{F}$, and thus $x_{4}\in T$ and $x_{1}\in F$, and then $T=\{a, a_{0}, b, b_{0}, x_{4}, x\}$.
If $x_{3}\in\overline{F}$, then $N_{G}(\{a, b, x\})\cap F=\{x_{1}\}$. Hence $T-\{a, b, x\}+\{x_{1}\}$ is a nontrivial $4$-cut of $G$, a contradiction. If $x_{3}\in F$, then $N_{G}(x_{2})\subset T$, and then $\{a_{0}, b_{0}, x_{4}\}$ is a $3$-cut of $G$, a contradiction. This proves Claim $2$.

Similarly, we have $\{A^{\prime}, C^{\prime}\}\nsubseteq T^{\prime}$.

{\bf Claim 3.} $\{B^{\prime}, C^{\prime}\}\nsubseteq T^{\prime}$.

By contradiction. Without loss of generality, we assume that $\{a, a_{0}\}\subset F$, then $\{x_{1}, x_{2}, x_{4}\}\subset F\cup T$. Hence $N_{G}(\{b, c\})\subset F\cup T$, implying that $T-\{b, c\}$ is a nontrivial $4$-cut of $G$. This proves Claim $3$.

By Claim $1$, $2$ and $3$, we have $|T^{\prime}\cap\{A^{\prime}, B^{\prime}, C^{\prime}\}|=1$.
If $B^{\prime}\in T^{\prime}$, then $\{a, a_{0}, c, c_{0}\}\subset F$ without loss of generality. Hence $N_{G}(b)\subset F\cup T$, and then $T-\{b\}$ is a nontrivial $4$-cut of $G$, a contradiction. So $B^{\prime}\notin T^{\prime}$.
Similarly, we have $C^{\prime}\notin T^{\prime}$.
Therefore, $T^{\prime}\cap\{A^{\prime}, B^{\prime}, C^{\prime}\}=\{A^{\prime}\}$.
If $\{B^{\prime}, C^{\prime}\}\subseteq F^{\prime}$ or $\{B^{\prime}, C^{\prime}\}\subseteq \overline{F^{\prime}}$, then $T-\{a\}$ is a nontrivial $4$-cut of $G$. So we can assume that $B^{\prime}\in F^{\prime}$ and $C^{\prime}\in\overline{F^{\prime}}$. In other words, $\{a, a_{0}\}\in T$, $\{b, b_{0}\}\in F$ and $\{c, c_{0}\}\in\overline{F}$. Hence $|F|\geq3$ and $|\overline{F}|\geq3$. Since $bx_{4}\in E(G)$ and $cx_{4}\in E(G)$, $x_{4}\in T$. Since $bx_{2}\in E(G)$ and $cx_{1}\in E(G)$, $x_{2}\in F$ and $x_{1}\in\overline{F}$ (for otherwise, $T-\{a\}$ is a nontrivial $4$-cut of $G$ ). And then $x\in T$, implying that $T-\{a, x\}+\{x_{1}\}$ or $T-\{a, x\}+\{x_{2}\}$ forms a nontrivial $4$-cut of $G$, a contradiction.
\end{proof}

\begin{lem}\label{lem7}
Let $G$ be a contraction critically quasi $5$-connected graph on at least $14$ vertices. If $G$ has the forth structure in Figure \ref{fig2}, then $G$ has a quasi $5$-contractible subgraph $H$ such that $\|H\|\leq3$.
\end{lem}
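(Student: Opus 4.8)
The plan is to produce the subgraph $H$ explicitly from the fourth structure and then verify that its contraction preserves quasi $5$-connectivity by the same fragment-chasing method used in Lemmas \ref{lem4}--\ref{lem6}, now reinforced by the contraction-critical hypothesis. First I would fix the candidate $H$ dictated by the picture (a small tree, or a union of at most three edges incident to the vertices $x_{1},x_{2},x_{3}$ and their private neighbours) and record every edge forced to be present by the fourth structure, together with the facts already at our disposal: $d_{G}(x_{4})\geq 5$, $G[N_{G}(x)]$ is triangle-free by Lemma \ref{lem2}, and---because $G$ is contraction critically quasi $5$-connected on at least $14$ vertices---$G$ contains neither the first nor the second structure (Lemmas \ref{lem4} and \ref{lem5}), while the third structure is already resolved by Lemma \ref{lem6}. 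These exclusions let me assume the ``missing'' adjacencies of the fourth structure (the dotted edges) are genuinely absent, which is exactly what pins $H$ down.

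Second, I would check the degree condition $\delta(G/H)\geq 4$. The only vertices whose degree can drop under contraction are the endpoints of $H$ and their common neighbours; for each such vertex I would show that a degree-$3$ outcome forces either a $3$-cut or a nontrivial $4$-cut of $G$ (typically of the form $\{x_{3},x_{4},\cdot,\cdot\}$ together with the private neighbours), using $|V(G)|\geq 14$ to guarantee both sides of the putative cut are large. With $\delta(G/H)\geq 4$ in hand, $4$-connectivity of $G/H$ follows from Lemma \ref{lem3} when $H$ is a single edge, and otherwise from a short direct argument: any cut of $G/H$ of order at most $3$ must contain at least two of the contracted vertices, and expanding them back yields a cut of $G$ of order at most $4$ that is nontrivial because each contracted vertex has at least two preimages, contradicting quasi $5$-connectivity.

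Third, assuming for contradiction that $H$ is not quasi $5$-contractible, $G/H$ has a nontrivial $4$-cut $T^{\prime}$ meeting the contracted set. I would lift $T^{\prime}$ to the corresponding set $T$ in $G$, write $F$ and $\overline{F}$ for a $T^{\prime}$-fragment and its complement, and run the case analysis according to how many of the contracted vertices lie in $T^{\prime}$ and how $x_{1},x_{2},x_{3},x_{4}$ distribute among $F$, $T$, and $\overline{F}$. In every branch the edges forced by the fourth structure confine the neighbourhoods of the contracted vertices to $F\cup T$ or to $\overline{F}\cup T$, so deleting those vertices from $T$ (and, where needed, adding back a single boundary vertex such as $x_{1}$ or $x_{2}$) produces either a $3$-cut or a nontrivial $4$-cut of $G$; triangle-freeness of $N_{G}(x)$ rules out the degenerate placements, and $|V(G)|\geq 14$ forces $|F|\geq 3$ or $|\overline{F}|\geq 3$ in the tight cases so that the resulting $4$-cut is genuinely nontrivial. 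Each branch contradicts quasi $5$-connectivity, so no such $T^{\prime}$ can exist.

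I expect the main obstacle to be the minimal-fragment branches of the third step, where $|F|=2$ and the lifted cut $T$ has order exactly $5$: there the naive deletion leaves a set of size $4$ that could a priori be trivial. Handling these requires combining the degree lower bounds on the private neighbours of $x_{1},x_{2},x_{3}$ (forced by $\delta(G)=4$ and $d_{G}(x_{4})\geq 5$) with the global bound $|V(G)|\geq 14$ to certify that the opposite side still carries at least two vertices after the surgery, and possibly re-invoking the absence of the first three structures to discard the last few configurations. Once these boundary cases are closed, the conclusion $0<\|H\|\leq 3$ is immediate from the construction of $H$.
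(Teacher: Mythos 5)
There is a genuine gap in your third step. You plan to show that \emph{every} branch of the fragment analysis for a nontrivial $4$-cut $T^{\prime}$ of $G/H$ ends in a contradiction, ``possibly re-invoking the absence of the first three structures to discard the last few configurations.'' But the absence of the third structure is not available to you: contraction criticality forbids only contractible \emph{single edges}, so Lemmas \ref{lem4} and \ref{lem5} do exclude the first and second structures, whereas Lemma \ref{lem6} contracts three disjoint edges and therefore does not exclude the third structure from a contraction critically quasi $5$-connected graph. Indeed, the paper's proof (which takes $H$ to be the pair of edges $aa_{0}$, $bb_{0}$, contracted to $A^{\prime}$, $B^{\prime}$) shows that the minimal-fragment branch you yourself flag as the main obstacle --- $A^{\prime}\in T^{\prime}$, $B^{\prime}\in F^{\prime}$, forcing $\overline{F}=\{x_{1},c\}$ with $c$ a $4$-degree vertex and $\{x_{1},a_{0},x_{4}\}\subset N_{G}(c)$ --- does \emph{not} lead to a contradiction. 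It exhibits exactly the third structure of Figure \ref{fig2}, and the lemma is then completed by invoking the \emph{conclusion} of Lemma \ref{lem6} (which supplies a different contractible subgraph), not by refuting the cut. The correct architecture is therefore ``either this contraction is quasi $5$-contractible, or $G$ contains the third structure and Lemma \ref{lem6} provides $H$,'' rather than ``no such $T^{\prime}$ can exist.'' As written, your argument would stall precisely at the boundary case you identify, because the configuration there is realizable and cannot be discarded.

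Your first two steps are essentially sound and match the paper: the degree check $\delta(G^{\prime})=4$ does go through by showing that a degree drop forces either a nontrivial $4$-cut or the first structure (the latter contradicting contraction criticality via Lemma \ref{lem4}), and $4$-connectivity of $G^{\prime}$ follows as in Lemma \ref{lem6}. The single missing idea is the non-contradictory exit to the third structure in the final case analysis.
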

\begin{proof}
Let $G^{\prime}$ be the graph obtained from $G$ by contracting $A:=\{a, a_{0}\}$, $B:=\{b, b_{0}\}$ to vertices $A^{\prime}$, $B^{\prime}$, respectively. We claim that $\delta(G^{\prime})=4$.
Suppose that $\delta(G^{\prime})\neq4$, then $d_{G}(a_{0})=4$ and $b_{0}\in N_{G}(a_{0})$. If $N_{G}(a_{0})=\{a, b, b_{0}, x_{2}\}$, then $\{x_{3}, x_{4}, c, b_{0}\}$ is a nontrivial $4$-cut of $G$ since $|V(G)|\geq14$, a contradiction. So $d_{G}(x_{2})=4$, and then $G$ has the first structure in Figure \ref{fig2}. By Lemma \ref{lem4}, $x_{2}b$ is quasi $5$-contractible, contradicts that $G$ is a contraction critically quasi $5$-connected graph. Therefore, $\delta(G^{\prime})=4$.

Similar to Lemma \ref{lem6}, we know that $G^{\prime}$ is $4$-connected. If $G^{\prime}$ is quasi $5$-connected, we obtain this result immediately. So we assume that $G^{\prime}$ is not quasi $5$-connected, then $G^{\prime}$ has a nontrivial $4$-cut $T^{\prime}$. Let $F^{\prime}$ be a $T^{\prime}$-fragment and $\overline{F^{\prime}}=G^{\prime}-T^{\prime}-F^{\prime}$.
We claim that $|T^{\prime}\cap\{A^{\prime}, B^{\prime}\}|=1$. If $\{A^{\prime}, B^{\prime}\}\subset T^{\prime}$, then $x_{1}$ and $x_{2}$ are one in $F$ and the other in $\overline{F}$. Now $T=\{a, a_{0}, b, b_{0}, x_{4}, x\}$, implying that $N_{G}(x_{2})\subseteq T$. If $x_{3}\in F$, then $\{a_{0}, b_{0}, x_{4}\}$ forms a $3$-cut of $G$, a contradiction. Hence, $x_{3}\in\overline{F}$, and thus, $T_{0}=T-\{a, b, x\}+\{x_{1}\}$ is a $4$-cut of $G$. Consequently, $F=\{x_{1}, c\}$ and $N_{G}(c)=\{x_{1}, a_{0}, b_{0}, x_{4}\}$, implying that $\{x_{3}, x_{4}, a_{0}, b_{0}\}$ forms a nontrivial $4$-cut of $G$, a contradiction. So $|T^{\prime}\cap\{A^{\prime}, B^{\prime}\}|=1$.

If $B^{\prime}\in T^{\prime}$, then $T-\{b\}$ is a nontrivial $4$-cut of $G$, a contradiction. So $A^{\prime}\in T^{\prime}$. Without loss of generality, we assume that $B^{\prime}\in F^{\prime}$. Note that $bx_{2}\in E(G)$, then $x_{2}\in F$ and $x_{1}\in\overline{F}$. Thus, $T\supset\{a, a_{0}, x_{4}, x\}$ and $|F|\geq3$, then $x_{3}\in F$. From that we have $\overline{F}=\{x_{1}, c\}$, and thus the vertex $c$ is a $4$-degree vertex such that $\{x_{1}, a_{0}, x_{4}\}\subset N_{G}(c)$. This implies that $G$ has the third structure in Figure \ref{fig2}. By Lemma \ref{lem6}, we can obtain this result.
\end{proof}

\begin{lem}\label{lem8}
Let $G$ be a contraction critically quasi $5$-connected graph on at least $14$ vertices. If $G$ has the fifth structure in Figure \ref{fig2}, then there exists a quasi $5$-contractible subgraph $H$ such that $\|H\|\leq3$.
\end{lem}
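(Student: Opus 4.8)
The plan is to mimic the strategy of Lemmas~\ref{lem6} and \ref{lem7}: identify a small subgraph $H$ (a single edge, or one or two pairs of vertices) suggested by the fifth structure, form the contracted graph $G'$, and then either verify directly that $G'$ is quasi $5$-connected or show that a surviving nontrivial $4$-cut forces $G$ to contain one of the already-handled structures (the third or fourth), whereupon Lemma~\ref{lem6} or Lemma~\ref{lem7} finishes the job. Since $G$ is contraction critically quasi $5$-connected, any edge contraction by itself fails, so the payoff must come from contracting a connected subgraph, consistent with the target conclusion $\|H\|\le 3$.

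First I would establish $\delta(G')=4$. The only way the minimum degree can drop is for a contracted pair to absorb a common neighbour and create a new $4$-vertex, and as in the opening paragraph of Lemma~\ref{lem7} this would expose a $4$-degree vertex whose neighbourhood realises the first structure in Figure~\ref{fig2}; Lemma~\ref{lem4} then supplies a quasi $5$-contractible edge, contradicting contraction criticality. (If instead it produces a small nontrivial cut outright, the bound $|V(G)|\ge 14$ makes that cut nontrivial, again contradicting quasi $5$-connectedness.) Next I would prove $G'$ is $4$-connected by the standard lift: a cut $T'$ of size $\le 3$ lifts to $T$ in $G$, and by examining which of the contracted vertices lie in $T'$ and exploiting the dense local adjacencies among $x_1,x_2,x_3,x_4$ and the $a,b,c$-type vertices, $T$ collapses either to a $3$-cut of $G$ or to a nontrivial $4$-cut, both impossible.

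The substantive step is to rule out a nontrivial $4$-cut $T'$ of $G'$. Let $F'$ be a $T'$-fragment and $\overline{F'}=G'-T'-F'$, and carry out the case analysis on $|T'\cap\{A',B',\dots\}|$ exactly as in the Claims of Lemma~\ref{lem6}. Whenever two contracted vertices both lie in $T'$, the lifted set $T$ becomes oversized and, after deleting one vertex whose neighbourhood is already trapped in $F\cup T$ (or in $\overline F\cup T$), yields a nontrivial $4$-cut of $G$. Whenever a contracted pair is split so that one half lies in $T$ and its partner on one side, the adjacencies $bx_2,cx_1,bx_4,cx_4\in E(G)$ (and their analogues here) pin $x_4$ into $T$ and distribute $x_1,x_2,x_3$ across $F,\overline F,T$, so that a deletion/insertion of the form $T-\{\,\cdot\,\}+\{x_i\}$ again produces a nontrivial $4$-cut. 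These reductions eliminate every case except one residual configuration.

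The main obstacle, as in Lemma~\ref{lem7}, will be this last residual case, where the nontrivial $4$-cut genuinely survives in $G'$. There I expect to use $|V(G)|\ge 14$ to force one side, say $\overline F$, to be exactly a two-vertex set $\{x_i,c\}$ with $c$ a new $4$-degree vertex satisfying $\{x_i,a_0,x_4\}\subset N_G(c)$ (mirroring the last paragraph of Lemma~\ref{lem7}). Pinning down these precise adjacencies identifies the fourth structure of Figure~\ref{fig2} inside $G$, and an appeal to Lemma~\ref{lem7} (hence ultimately to Lemma~\ref{lem6}) delivers the desired quasi $5$-contractible subgraph $H$ with $\|H\|\le 3$. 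The delicate points are verifying that the degree of the newly found vertex is exactly four and that its neighbourhood matches the fourth structure rather than some excluded configuration.
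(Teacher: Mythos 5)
There is a genuine gap: you never actually specify the subgraph $H$ whose contraction you propose to analyse, and that choice is the entire content of the lemma. Your text describes a generic template (``contract something, lift a surviving nontrivial $4$-cut, delete or swap a vertex to get a contradiction or an earlier structure''), but every one of the concrete steps you invoke --- which adjacencies pin $x_4$ into $T$, which vertex gets deleted from $T$, which side collapses to a two-vertex fragment --- depends on knowing what was contracted, so none of it can be checked. In particular the claim ``these reductions eliminate every case except one residual configuration'' is not something one can take on faith here; in the paper's own argument the surviving cases are two, not one, and they reduce to \emph{different} earlier structures (the fourth in one branch, the third in the other), whereas you only account for a reduction to the fourth structure.

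You also miss the two-stage architecture that makes the paper's proof work. The paper first contracts the \emph{single edge} $aa_0$ --- not as a candidate for $H$, but as a probe: contraction criticality guarantees $G/aa_0$ has a nontrivial $4$-cut, and the location of $x_3$ relative to that cut either exhibits the fourth structure outright (Case 1, where $F=\{x_1,c\}$ forces a new $4$-vertex $c$ with $\{x_1,a_0,x_4\}\subset N_G(c)$) or forces $\overline{F}=\{x_2,b\}$ with $N_G(b)=\{x_2,a_0,x_4,b_0\}$, and only \emph{then} does one know enough to pick the actual candidate, namely the contraction of $A=\{a,a_0\}$ and $B=\{x,x_2\}$; if that in turn fails, the surviving cut pins down the third structure and Lemma \ref{lem6} finishes. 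Your remark that ``any edge contraction by itself fails, so the payoff must come from contracting a connected subgraph'' shows you see half of this, but you do not exploit the failure of a single-edge contraction as an information-gathering step, which is where the specific vertices to contract come from. Without that, the proposal is a plausible plan rather than a proof.
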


\begin{proof}
Firstly, we claim that $\delta(G/aa_{0})=4$. For otherwise, $d_{G}(x_{1})=\{x, x_{4}, a, a_{0}\}$, then $\{x_{3}, x_{4}, b, a_{0}\}$ is a nontrivial $4$-cut of $G$, a contradiction. So $\delta(G/aa_{0})=4$. Thus, $G/aa_{0}$ is $4$-connected by Lemma \ref{lem3}. Since $G$ is contraction critically, $G/aa_{0}$ is not quasi $5$-connected.
So there exists a nontrivial $4$-cut $T^{\prime}$ of $G/aa_{0}$. Let $F^{\prime}$ be a $T^{\prime}$-fragment and $\overline{F^{\prime}}=G/aa_{0}-T^{\prime}-F^{\prime}$. Without loss of generality, we can assume that $x_{1}\in F$ and $x_{2}\in\overline{F}$. Since $|V(G)|\geq14$, we have $x_{3}\notin T$.

{\bf Case 1.} $x_{3}\in\overline{F}$.

Then $|F|=2$. We assume that $F=\{x_{1}, c\}$, then $d_{G}(c)=4$ and $\{x_{1}, a_{0}, x_{4}\}\subset N_{G}(c)$. Let $N_{G}(c)=\{x_{1}, a_{0}, x_{4}, c_{0}\}$, then we have $N_{G}(x_{1})\subseteq \{a, x, c, x_{4}, a_{0}, c_{0}\}$.
If $\{c_{0}\}=\{b\}$, then $\{a_{0}, x_{4}, b, x_{3}\}$ forms a nontrivial $4$-cut of $G$. So the $10$ vertices in the list $L=\{x, x_{1}, x_{2}, x_{3}, x_{4}, a, a_{0}, b, c, c_{0}\}$ are pairwise distinct. Hence, there exists the forth structure in Figure \ref{fig2}.

{\bf Case 2.} $x_{3}\in F$.

Then $\overline{F}=\{x_{2}, b\}$, and thus $d_{G}(b)=4$ and $\{x_{2}, a_{0}, x_{4}\}\subset N_{G}(b)$. Let $N_{G}(b)=\{x_{2}, a_{0}, x_{4}, b_{0}\}$. Now, we focus on the graph $G_{0}^{\prime}$ obtained from $G$ by contracting $A:=\{a, a_{0}\}$, $B:=\{x, x_{2}\}$ to vertices $A^{\prime}$, $B^{\prime}$ respectively. Clearly, $G_{0}^{\prime}$ is $4$-connected.
If $G_{0}^{\prime}$ is quasi $5$-connected, we get the Lemma immediately. So we suppose that $G_{0}^{\prime}$ is not quasi $5$-connected, then $G_{0}^{\prime}$ has a nontrivial $4$-cut $T_{0}^{\prime}$. Let $F_{0}^{\prime}$ be a $T_{0}^{\prime}$-fragment and $\overline{F_{0}^{\prime}}=G_{0}^{\prime}-T_{0}^{\prime}-F_{0}^{\prime}$. let $F_{0}$, $T_{0}$, $\overline{F_{0}}$ be the sets in $G$ corresponding to $F_{0}^{\prime}$, $T_{0}^{\prime}$, $\overline{F_{0}^{\prime}}$ in $G_{0}^{\prime}$.

If $|T_{0}^{\prime}\cap\{A^{\prime}, B^{\prime}\}|=1$, then we can easily find a nontrivial $4$-cut of $G$. So $\{A^{\prime}, B^{\prime}\}\subseteq T_{0}^{\prime}$. Then $|T_{0}|=6$ and $T_{0}\supset\{a, a_{0}, x, x_{2}\}$. Thus $x_{4}\in T_{0}$, and thus $x_{1}\notin T_{0}$. Without loss of generality, we suppose that $x_{1}\in F_{0}$, then $b\in\overline{F_{0}}$, and thus $|F_{0}|=2$. Let $F_{0}=\{x_{1}, c\}$, then $d_{G}(c)=4$ and $\{x_{1}, x_{4}, a_{0}\}\subseteq N_{G}(c)$. So there exists the third structure in Figure \ref{fig2}.

By Case $1$ and Case $2$, we can derive this lemma directly.
\end{proof}

Now we are prepared to prove our Theorem \ref{thm1}.

{\bf Proof of Theorem 1.} Let $N_{G}(x)=\{x_{1}, x_{2}, x_{3}, x_{4}\}$. Without loss of generality, we suppose that $x_{i}x_{4}\in E(G)$ $(i=1, 2, 3)$. So, $d(x_{4})\geq5$, and then $G/xx_{j}$ ($j=1,2,3$) is $4$-connected by Lemma \ref{lem3}. Since $G$ is a contraction critically quasi $5$-connected graph, $G/xx_{j}$ ($j=1,2,3$) is not quasi $5$-connected. So $G/xx_{1}$ and $G/xx_{2}$ have nontrivial $4$-cuts $T_{1}^{\prime}$ and $T_{2}^{\prime}$, separately. And we have $\overline{xx_{1}}\in T_{1}^{\prime}$, $\overline{xx_{2}}\in T_{2}^{\prime}$. For $i\in\{1, 2\}$, let $F_{i}^{\prime}$ be a fragment of $G/xx_{i}-T_{i}^{\prime}$, $\overline{F_{i}^{\prime}}=G/xx_{i}-T_{i}^{\prime}-F_{i}^{\prime}$. Let $F_{i}$, $T_{i}$, $\overline{F_{i}}$ be the sets in $G$ corresponding to $F_{i}^{\prime}$, $T_{i}^{\prime}$, $\overline{F_{i}^{\prime}}$ in $G/xx_{i}$. Then $T_{i}\supset\{x, x_{i}\}$, $|F_{i}|\geq2$, $|\overline{F_{i}}|\geq2$, and $x_{4}\in T_{1}\cap T_{2}$. Without loss of generality, we assume that $x_{1}\in T_{1}\cap F_{2}$, $x_{2}\in F_{1}\cap T_{2}$ and $x_{3}\in \overline{F_{1}}\cap\overline{F_{2}}$.
Let $X_{1}=(T_{1}\cap F_{2})\cup(T_{1}\cap T_{2})\cup(F_{1}\cap T_{2})$, $X_{2}=(T_{1}\cap F_{2})\cup(T_{1}\cap T_{2})\cup(\overline{F_{1}}\cap T_{2})$, $X_{3}=(\overline{F_{1}}\cap T_{2})\cup(T_{1}\cap T_{2})\cup( T_{1}\cap\overline{F_{2}})$ and $X_{4}=(F_{1}\cap T_{2})\cup(T_{1}\cap T_{2})\cup(T_{1}\cap\overline{F_{2}})$.

{\bf Case 1.} $|X_{2}|\neq5$.

If $|X_{2}|<5$, then $|X_{4}|>5$ by the fact that $|X_{2}|+|X_{4}|=10$. Thus $F_{1}\cap\overline{F_{2}}=\emptyset$ and $|\overline{F_{2}}\cap T_{1}|\leq1$. If $\overline{F_{2}}\cap T_{1}=\emptyset$, then $T_{2}-\{x_{2}\}$ forms a nontrivial $4$-cut of $G$, a contradiction. So $|\overline{F_{2}}\cap T_{1}|=1$. Hence we have $|X_{1}|=5$. Since $|F_{1}|\geq2$, $|F_{1}\cap F_{2}|=1$. Then $G$ has the fifth structure in Figure \ref{fig2}.

If $|X_{2}|>5$, by using the same argument as for the $|X_{2}|<5$, we can deduce that $G$ has the fifth structure in Figure \ref{fig2}.

{\bf Case 2.} $|X_{2}|=5$.

We claim that $|\overline{F_{2}}\cap T_{1}|=1$. If $|\overline{F_{2}}\cap T_{1}|=2$, we have $|X_{1}|=4$. Then $F_{1}\cap F_{2}=\emptyset$, and then $T_{2}-\{x_{2}\}$ forms a nontrivial $4$-cut of $G$, a contradiction.
If $|\overline{F_{2}}\cap T_{1}|=0$, then $|\overline{F_{1}}\cap T_{2}|=0$. Since $\overline{F_{1}}\cap\overline{F_{2}}\neq\emptyset$, $|X_{3}|\geq4$. Then $|T_{1}\cap T_{2}|=4$, and then $|F_{2}\cap T_{1}|=|F_{1}\cap T_{2}|=|\overline{F_{1}}\cap F_{2}|=|\overline{F_{2}}\cap F_{1}|=|\overline{F_{1}}\cap\overline{F_{2}}|=1$. Hence, $G$ the second structure in Figure \ref{fig2}. By Lemma \ref{lem5}, $xx_{3}$ is quasi $5$-contractible, which contradicts that $G$ is contraction critically.

So $|\overline{F_{2}}\cap T_{1}|=1$.
If $|T_{1}\cap T_{2}|=2$, then $|X_{3}|=4$, and then $\overline{F_{1}}\cap\overline{F_{2}}=\{x_{3}\}$.
Let $\overline{F_{1}}\cap T_{2}=\{b\}$, $\overline{F_{2}}\cap T_{1}=\{a\}$. If $ab\in E(G)$, then there exists the first structure in Figure \ref{fig2}, a contradiction. Hence, $ab\notin E(G)$. Then we have $|\overline{F_{1}}\cap F_{2}|=0$, for otherwise $T_{1}-\{x, a\}+\{x_{3}\}$ is a nontrivial $4$-cut of $G$. Notice that, there exist the fifth structure in Figure \ref{fig2}.
We now consider that $|T_{1}\cap T_{2}|=3$, then $F_{1}\cap F_{2}\neq\emptyset$. Since $|X_{1}|=5$, $|F_{1}\cap F_{2}|=1$.
If $|F_{1}\cap\overline{F_{2}}|=0$, then there exist the fifth structure in Figure \ref{fig2}. If $|F_{1}\cap\overline{F_{2}}|=1$, then there exist the third structure ($|\overline{F_{1}}\cap F_{2}|=1$) or the forth structure ($|\overline{F_{1}}\cap F_{2}|=0$) in Figure \ref{fig2}.

By Case $1$ and Case $2$, $G$ must have some structure as shown in Figure \ref{fig2}, then we can obtain the theorem directly by Lemma \ref{lem4}, \ref{lem5}, \ref{lem6}, \ref{lem7} and \ref{lem8}.
\hfill\qedsymbol

\section{Proof of Theorem \ref{thm2}}
Let $G$ be a quasi $5$-connected graph on at least $14$ vertices. We suppose that there exists a vertex $x\in V_{4}(G)$ and $N_{G}(x)=\{x_{1}, x_{2}, x_{3}, x_{4}\}$.
In Lemma \ref{lem9} and \ref{lem10}, we consider that $G[N_{G}(x)]\cong C_{4}$ and $\{x_{1}x_{2}, x_{2}x_{3}, x_{3}x_{4}, x_{4}x_{1}\}\subseteq E(G)$, then we have $d_{G}(x_{i})\geq5$ ($i=1, 2, 3, 4$).

\begin{figure}[t]
  \centering
  \includegraphics{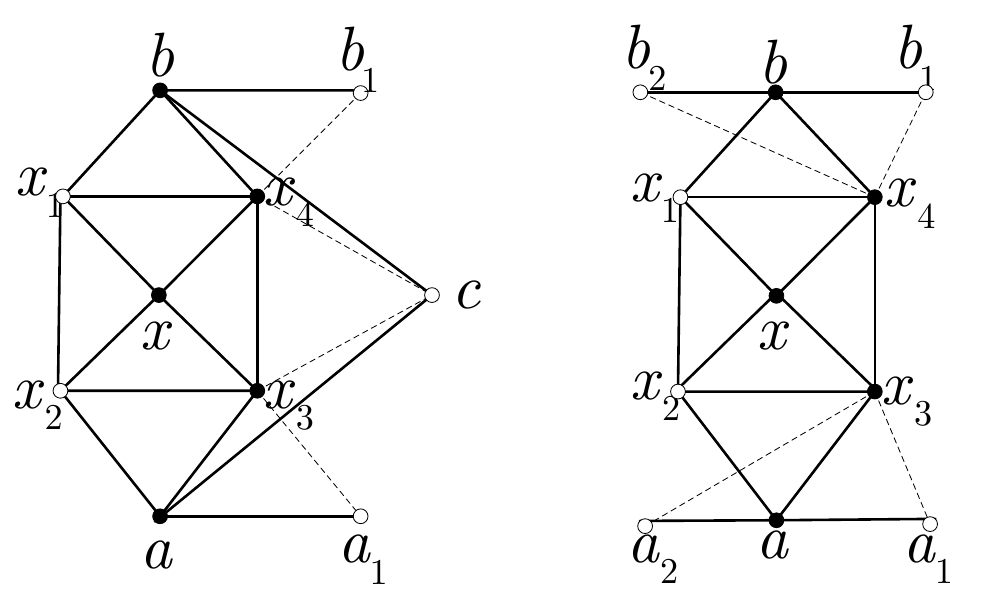}\\
  \caption{The structures of Lemma \ref{lem9} and Lemma \ref{lem10}}
  \label{fig3}
\end{figure}

\begin{lem}\label{lem9}
Let $G$ be a contraction critically quasi $5$-connected graph on at least $14$ vertices. If $G$ has the first structure in Figure \ref{fig3}, then $G$ has a quasi $5$-contractible subgraph $H$ such that $\|H\|\leq3$.
\end{lem}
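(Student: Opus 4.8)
The plan is to follow the template of Lemmas~\ref{lem6}--\ref{lem8}: form an auxiliary graph $G'$ by contracting the short connected subgraph $H$ (an edge, or a two- or three-edge subgraph) indicated in the first structure of Figure~\ref{fig3} to a single vertex $H'$, and then prove that $G'$ is again quasi $5$-connected, so that $H$ itself is the desired quasi $5$-contractible subgraph with $\|H\|\leq 3$. As a first step I would pin down the minimum degree, showing $\delta(G')=4$. The only way this can fail is that a neighbour of one of the contracted vertices drops to degree $4$ in $G'$; writing out its neighbourhood explicitly and using the four cycle-edges $x_1x_2,x_2x_3,x_3x_4,x_4x_1$ together with $|V(G)|\geq 14$ forces a set such as $\{x_3,x_4,\ldots\}$ to be a nontrivial $4$-cut of $G$, contradicting quasi $5$-connectedness. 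With $\delta(G')=4$ in hand, Lemma~\ref{lem3} yields that $G'$ is $4$-connected, which also disposes of the possibility $\kappa(G')\leq 3$.

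The heart of the argument is the assumption that $G'$ is $4$-connected but not quasi $5$-connected, so it has a nontrivial $4$-cut $T'$ with $H'\in T'$; lifting $T'$ back to $G$ produces a cut $T=(T'\setminus\{H'\})\cup V(H)$ of size $|T'|-1+|V(H)|\leq 7$, and I would split the analysis according to where the four neighbours $x_1,x_2,x_3,x_4$ lie relative to a $T'$-fragment $F$ and its co-fragment $\overline F$. Because $x$ is a common neighbour of the entire $4$-cycle, the edges of $C_4$ glue the $x_i$ together: once one $x_i$ is placed in $F$ (resp. $\overline F$), its two cycle-neighbours are confined to $F\cup T$ (resp. $\overline F\cup T$), so the symmetry of $C_4$ leaves only a handful of genuinely distinct placements. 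In each surviving placement I would read off that $N_G$ of the contracted set lands entirely on one side, so that deleting that set from $T$ produces either a $3$-cut or a nontrivial $4$-cut of $G$ — the contradiction that closes the case.

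The main obstacle, and the reason this case is harder than the $K_{1,3}$ analysis, is precisely the greater symmetry of $C_4$: several placements of the $x_i$ survive the first pruning, and the degenerate subcases $|F|=2$ or $|\overline F|=2$ do not terminate in a contradiction but instead expose a new degree-$4$ vertex $c$ whose neighbourhood contains three prescribed vertices among the $x_i$ and the contracted vertices. Rather than closing directly, these subcases realise the second structure of Figure~\ref{fig3}, and I would finish them by appealing to Lemma~\ref{lem10}, exactly as Lemmas~\ref{lem7} and~\ref{lem8} reduce to the third structure of Figure~\ref{fig2} via Lemma~\ref{lem6}. Carefully checking that every surviving configuration is either internally contradictory or reducible to Lemma~\ref{lem10} — with the bound $|V(G)|\geq 14$ guaranteeing that both $|F|$ and $|\overline F|$ are large enough to make each extracted cut nontrivial — is where the bulk of the routine but delicate work lies.
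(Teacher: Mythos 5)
Your proposal is a plan rather than a proof, and it misses the two ideas that actually carry the paper's argument. First, the paper does not contract a single connected subgraph to a single vertex. It begins by disposing of the case $d_G(x_3)=6$ or $d_G(x_4)=6$ via Theorem~\ref{thm1} (since then $G[N_G(a)]$ or $G[N_G(b)]$ is a claw), and then, with $d_G(x_3)=d_G(x_4)=5$, splits on whether $c\in N_G(x_3)\cup N_G(x_4)$. When $c$ is adjacent to neither, it contracts the triangle $\triangle xx_3x_4$; when (say) $c\in N_G(x_4)$, it contracts the \emph{disconnected} subgraph consisting of the two edges $bb_1$ and $xx_3$, producing two new vertices $A'$ and $B'$. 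Identifying which subgraph to contract, and the fact that this choice must change with the position of $c$, is the substance of the lemma; your template of ``contract the short connected subgraph indicated in the figure to a single vertex $H'$'' does not specify it and, as stated, cannot cover the second case at all. (Relatedly, Lemma~\ref{lem3} as stated only handles contraction of one edge, so your appeal to it for $4$-connectedness of $G'$ needs at least the routine extension the paper silently performs.)

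Second, your exit strategy for the degenerate subcases is not available. You propose that the subcases with $|F|=2$ or $|\overline F|=2$ ``realise the second structure of Figure~\ref{fig3}'' and can be closed by Lemma~\ref{lem10}. In the paper every such subcase terminates in a direct contradiction (a forbidden $3$-cut, a nontrivial $4$-cut, an impossible neighbourhood for the extra vertex, or $|V(G)|\leq 12$ against the hypothesis $|V(G)|\geq 14$); none of them is resolved by falling into the second structure. The two structures of Figure~\ref{fig3} arise from mutually exclusive configurations in the proof of Theorem~\ref{thm2} ($|T_1\cap T_2|=2$ versus $|T_1\cap T_2|=1$), so a reduction from the first to the second is not automatic and you give no argument for it. Without the correct choice of contracted subgraphs and with the degenerate subcases left to an unsubstantiated reduction, the proposal has a genuine gap at the heart of the proof.
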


\begin{proof}
If $d_{G}(x_{3})=6$ or $d_{G}(x_{4})=6$, then $G[N_{G}(a)]\cong K_{1,3}$ or $G[N_{G}(b)]\cong K_{1,3}$, and then we can get this lemma immediately by Theorem \ref{thm1}. Therefore, we assume that $d_{G}(x_{3})=d_{G}(x_{4})=5$.

{\bf Case 1.} $c\notin N_{G}(x_{3})$ and $c\notin N_{G}(x_{4})$.

We consider the graph $G^{\prime}=G/\triangle xx_{3}x_{4}$. Clearly, $\delta(G^{\prime})=4$ and $G^{\prime}$ is $4$-connected. We shall show that $G^{\prime}$ is quasi $5$-connected. Suppose that $G^{\prime}$ is not quasi $5$-connected, then $G^{\prime}$ has a nontrivial $4$-cut $T^{\prime}$. Let $F^{\prime}$ be a $T^{\prime}$-fragment and $\overline{F^{\prime}}=G^{\prime}-T^{\prime}-F^{\prime}$. Then we have $\{x_{1}, x_{2}\}\nsubseteq T$. Without loss of generality, we suppose that $F$ contains $x_{1}$ or $x_{2}$.

If $|\{x_{1}, x_{2}\}\cap T|=0$, then we have $a\in T$ and $a_{1}\in\overline{F}$. Similarly, $b\in T$ and $b_{1}\in\overline{F}$. However, we can find a nontrivial $4$-cut of $G$ wherever $c$ is, a contradiction.
So $|\{x_{1}, x_{2}\}\cap T|=1$.
Without loss of generality, we assume that $x_{1}\in T$. Same as described above, we have $a\in T$, $a_{1}\in\overline{F}$ and $\{b, b_{1}\}\cap F=\emptyset$. If $c\notin F$, then $|N_{G}(\{x, x_{3}, x_{4}, a\})\cap F|=1$, and then $|N_{G}(F-\{x_{2}\})|=3$, a contradiction. Thus, $c\in F$, and thus $b\in T$ and $b_{1}\in\overline{F}$. Then we can deduce that $|F|=|\overline{F}|=2$, and then $|V(G)|=10$, a contradiction.
This proves $\triangle xx_{3}x_{4}$ is quasi $5$-contractible.

{\bf Case 2.} $c\in N_{G}(x_{3})$ or $c\in N_{G}(x_{4})$.

Without loss of generality, we consider that $c\in N_{G}(x_{4})$.
Let $G^{\prime}$ be the graph obtained from $G$ by contracting $A:=\{b, b_{1}\}$, $B:=\{x, x_{3}\}$ to vertices $A^{\prime}$, $B^{\prime}$, respectively. Since $|V(G)|\geq12$, $\delta(G^{\prime})=4$. Then we can get $G^{\prime}$ is $4$-connected easily. Suppose that $G^{\prime}$ is not quasi $5$-connected, then $G^{\prime}$ has a nontrivial $4$-cut $T^{\prime}$ such that $|\{A^{\prime}, B^{\prime}\}\cap T^{\prime}|\neq0$. Let $F^{\prime}$ be a $T^{\prime}$-fragment and $\overline{F^{\prime}}=G^{\prime}-T^{\prime}-F^{\prime}$.

{\bf Subcase 2.1.} $|\{A^{\prime}, B^{\prime}\}\cap T^{\prime}|=2$.

Firstly, we show that, $x_{1}\in T$. Assume the contrary, $x_{1}\in F$, without loss of generality. Then $\{x_{2}, x_{4}\}\subseteq F\cup T$. Therefore, $c\in\overline{F}$, for otherwise, $T-\{b, x\}$ forms a nontrivial $4$-cut of $G$. Hence $x_{4}\in T$, it follows that $|\overline{F}|=2$ and the neighborhood set of the vertex in $\overline{F}$ that is not $c$ contains $\{b_{1}, x_{3}, c\}$. However, there are no such vertices in the graph $G$, either $c\in N_{G}(x_{3})$ or $a_{1}\in N_{G}(x_{3})$. This proves $x_{1}\in T$.
Then, we can conclude that $x_{4}\in T$ similarly. Thus, $T=\{b, b_{1}, x, x_{3}, x_{1}, x_{4}\}$, and thus $T-\{b, x\}$ forms a nontrivial $4$-cut of $G$, a contradiction.

{\bf Subcase 2.2.}  $|\{A^{\prime}, B^{\prime}\}\cap T^{\prime}|=1$.

Suppose that $A^{\prime}\in T^{\prime}$ and $B^{\prime}\in F^{\prime}$. Then we have $a_{1}x_{3}\in E(G)$, $x_{1}\in F$, $x_{4}\in T$ and $c\in\overline{F}$, for otherwise, $T-\{b\}$ forms a nontrivial $4$-cut of $G$. Since $ax_{3}\in E(G)$ and $ac\in E(G)$, $a\in T$. Since $\{x_{2}, a_{1}\}\subseteq F\cup T$, $N_{G}(\{b, x_{4}, a\})\cap\overline{F}=\{c\}$, it follows that $|N_{G}(\overline{F}-\{c\})|=3$, a contradiction.

Suppose that $A^{\prime}\in F^{\prime}$ and $B^{\prime}\in T^{\prime}$. Evidently, $x_{4}\in F$, $x_{1}\in T$ and $x_{2}\in\overline{F}$. Let $N_{G}(x_{3})\cap F:=I$. Since $ax_{2}\in E(G)$, $a\notin I$. If $I=\{x_{4}\}$, then $T-\{x, x_{3}\}+\{x_{4}\}$ forms a nontrivial $4$-cut of $G$. So $|I|=2$, which means $a_{1}\in I$ or $c\in I$. However, $N_{G}(\overline{F}-\{x_{2}\})$ forms a $3$-cut or $N_{G}(F-\{x_{4}, b, b_{1}, c\})$ forms a nontrivial $4$-cut of $G$ in both cases, a contradiction.
This proves $\{bb_{1}, xx_{3}\}$ is quasi $5$-contractible.

By Case $1$ and Case $2$, the subgraph $\triangle xx_{3}x_{4}$ or $\{bb_{1}, xx_{3}\}$ is quasi $5$-contractible.
\end{proof}

\begin{figure}[t]
  \centering
  \includegraphics{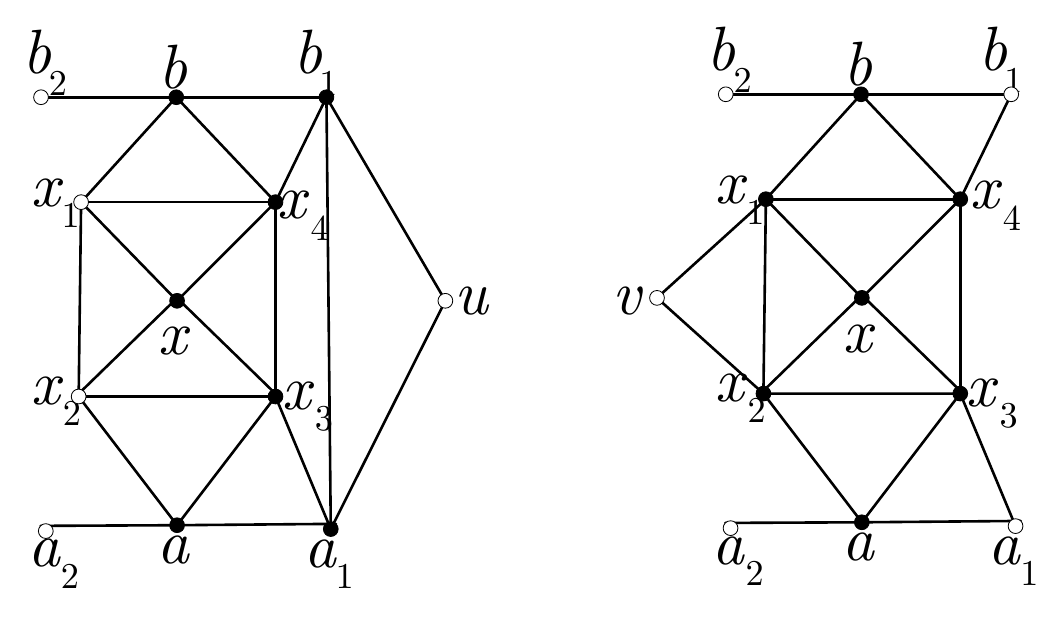}\\
  \caption{Lemma \ref{lem10}}
  \label{fig4}
\end{figure}

\begin{lem}\label{lem10}
Let $G$ be a contraction critically quasi $5$-connected graph on at least $14$ vertices. If $G$ has the second structure in Figure \ref{fig3}, then $G$ has a quasi $5$-contractible subgraph $H$ such that $\|H\|\leq3$.
\end{lem}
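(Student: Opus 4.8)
The plan is to reuse the contract-and-lift machinery that drives Lemmas~\ref{lem4}--\ref{lem9}. Since $G[N_G(x)]\cong C_4$ with $x_1x_2,x_2x_3,x_3x_4,x_4x_1\in E(G)$, every consecutive triple $\{x,x_i,x_{i+1}\}$ spans a triangle, so the subgraph $H$ I would contract is again either a triangle through $x$ (as $\triangle xx_3x_4$ in Lemma~\ref{lem9}, Case~1) or a pair of edges, one of them a spoke $xx_i$ and the other an auxiliary edge (as in Lemma~\ref{lem9}, Case~2); in either case $\|H\|\le3$, matching the conclusion. The precise choice is the one prescribed by the second structure of Figure~\ref{fig3} together with its refinement in Figure~\ref{fig4}.

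Before the main argument I would clear the degenerate possibilities exactly as in Lemma~\ref{lem9}: if one of $x_3,x_4$ has degree $6$, then the neighbourhood of one of the auxiliary vertices is a $K_{1,3}$, and Theorem~\ref{thm1} applies at once; hence I may assume $d_G(x_3)=d_G(x_4)=5$, which fixes the configuration of Figure~\ref{fig4}. Forming $G'=G/H$, the two routine steps are (i) $\delta(G')=4$, where the only risk is a neighbour of $H$ dropping below degree $4$, and each such exception either yields a nontrivial $4$-cut of $G$ outright (invoking $|V(G)|\ge14$) or exhibits a $K_{1,3}$-neighbourhood handled by Theorem~\ref{thm1}; and (ii) $G'$ is $4$-connected, which follows as in Lemma~\ref{lem6} and Lemma~\ref{lem9} by lifting any cut of $G'$ of size $\le3$ to a cut of $G$ of size $\le3$, contradicting quasi $5$-connectivity.

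The substance of the proof is step (iii). Assuming $G'$ is not quasi $5$-connected, I take a nontrivial $4$-cut $T'$ with a $T'$-fragment $F'$ and co-fragment $\overline{F'}$, lift to $T,F,\overline{F}$ in $G$, and split on how $T'$ meets the contracted image(s). The $C_4$-edges $x_ix_{i+1}$ together with the spokes $xx_i$ force, in each branch, the positions of $x_1,x_2$ and of the fifth neighbour of $x_3$ or $x_4$ relative to $T,F,\overline{F}$. I expect every branch to terminate in one of three ways: a $3$-cut of $G$; a nontrivial $4$-cut of $G$ produced by trading a contracted pair back for a single prescribed neighbour (the $T-\{b,x\}$-type move of Lemma~\ref{lem9}); or a vertex count forcing $|V(G)|<14$. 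Any remaining branch should collapse to the first structure of Figure~\ref{fig3} or to a $K_{1,3}$-neighbourhood, so that Lemma~\ref{lem9} or Theorem~\ref{thm1} finishes the job.

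The main obstacle is the bookkeeping when $T'$ contains \emph{two} contracted images (for instance both $A'$ and $B'$ in the edge-pair contraction). Because the $C_4$ creates four triangles through $x$, several auxiliary vertices may sit on either side of the cut, so the forced positions of $x_1,x_4$ and of the fifth neighbour must be tracked simultaneously; the contradiction typically surfaces only after isolating a fragment of size $2$ and observing that its unique ``new'' vertex would need three prescribed neighbours that the $C_4$-structure cannot supply --- the ``no such vertex in $G$'' contradiction of Subcase~2.1 in Lemma~\ref{lem9}. Ensuring that every placement funnels into one of the three standard contradictions, with no untreated configuration left over, is the crux.
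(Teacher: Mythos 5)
Your overall strategy --- contract a small subgraph dictated by the local structure, check $\delta=4$ and $4$-connectedness, then lift any nontrivial $4$-cut of the contraction back to $G$ --- is indeed the paper's strategy, and your disposal of the degenerate degree-$6$ case via Theorem~\ref{thm1} matches the opening of Lemma~\ref{lem9}. But the proposal has a genuine gap precisely at the point you yourself flag as the crux. The paper's organisation is not ``triangle versus spoke-plus-auxiliary-edge, as prescribed by Figure~\ref{fig4}''; rather, it first contracts $\triangle xx_{3}x_{4}$ and shows that any nontrivial $4$-cut $T'$ of the contraction forces $\{x_1,x_2\}$ into one fragment, $\{a,b\}$ into $T$, $\{a_1,b_1\}$ into the other fragment, and then uses $|V(G)|\geq 14$ to exclude $a_2\in T$; the two remaining positions of $a_2$ (in $F$ or in $\overline{F}$) \emph{are} the two structures of Figure~\ref{fig4}. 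So the exceptional configurations are discovered as the residue of the failed triangle contraction, and the argument that nothing else can survive is exactly the case analysis you defer.

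More importantly, in those exceptional configurations the subgraph that the paper contracts is neither of the two shapes you allow yourself. It is the three-edge matching obtained by contracting $A:=\{b,b_2\}$, $B:=\{b_1,x_4\}$ and $C:=\{x,x_1\}$ simultaneously --- a Lemma~\ref{lem6}-style triple contraction, not a Lemma~\ref{lem9}-style pair $\{bb_1,xx_3\}$ and not a triangle. You give no argument that a two-edge subgraph suffices in these cases, and since the triangle contraction provably fails there, your stated menu of candidate subgraphs does not cover the situation. Combined with the fact that the entire verification in step (iii) is left as ``ensure every placement funnels into a contradiction,'' the proposal identifies the right framework but omits both the correct choice of $H$ in the hard case and the case analysis that constitutes the substance of the proof.
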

\begin{proof}
We just have to think about $d_{G}(x_{3})=d_{G}(x_{4})=5$.
Without loss of generality, assume that $a_{1}\in N_{G}(x_{3})$ and $b_{1}\in N_{G}(x_{4})$.

{\bf Case 1.} The graph $G$ has the first or second structure in Figure \ref{fig4}.

We firstly assume that the graph $G$ has the first structure in Figure \ref{fig4}.
Let $G^{\prime}$ be the graph obtained from $G$ by contracting $A:=\{b, b_{2}\}$, $B:=\{b_{1}, x_{4}\}$, $C:=\{x, x_{1}\}$ to vertices $A^{\prime}$, $B^{\prime}$, $C^{\prime}$, respectively. We shall show that $G^{\prime}$ is quasi $5$-connected. Clearly, $\delta(G^{\prime})=4$. And we can get $G^{\prime}$ is $4$-connected easily.
Let us assume, to the contrary, that $G^{\prime}$ has a nontrivial $4$-cut $T^{\prime}$. Let $F^{\prime}$ be a $T^{\prime}$-fragment and $\overline{F^{\prime}}=G^{\prime}-T^{\prime}-F^{\prime}$.

If $|T^{\prime}\cap\{A^{\prime}, B^{\prime}, C^{\prime}\}|>0$, then we can find a $3$-cut or nontrivial $4$-cut of $G$. We omit the specific process. This shows that $T$ is a nontrivial $4$-cut of $G$, a contradiction. So $G^{\prime}$ is quasi $5$-connected.
If $G$ has the second, we can get the same result similarly.

{\bf Case 2.} The graph $G$ has neither the first nor the second structure in Figure \ref{fig4}.

Let $G^{\prime}$ be the graph obtaining from $G$ by contracting $\triangle xx_{3}x_{4}$.
Clearly, $G^{\prime}$ is $4$-connected. We shall show that $G^{\prime}$ is quasi $5$-connected. Suppose that $G^{\prime}$ has a nontrivial $4$-cut $T^{\prime}$.
Let $F^{\prime}$ be a $T^{\prime}$-fragment and $\overline{F^{\prime}}=G^{\prime}-T^{\prime}-F^{\prime}$.
Then $|T\cap\{x_{1}, x_{2}\}|=0$.
Without loss generality, we assume that $\{x_{1}, x_{2}\}\subseteq F$. Then $\{a, b\}\subseteq T$ and $\{a_{1}, b_{1}\}\subseteq\overline{F}$. If $a_{2}\in T$, then $|F|\leq3$ and $|\overline{F}|\leq3$, and then $|V(G)|\leq12$, a contradiction. So we have $a_{2}\notin T$. Similarly, we have $b_{2}\notin T$. If $a_{2}\in F$, then $G$ has the first structure in Figure \ref{fig4}. If $a_{2}\in\overline{F}$, we can obtain that $G$ has the second structure in Figure \ref{fig4}. This proves the graph $G^{\prime}$ is quasi $5$-connected.

By Case $1$ and Case $2$, this lemma is proved.
\end{proof}

Now we are prepared to prove our Theorem \ref{thm2}.

{\bf Proof of Theorem \ref{thm2}.}
Let $N_{G}(x)=\{x_{1}, x_{2}, x_{3}, x_{4}\}$. Without loss of generality, we suppose that $\{x_{1}x_{2}, x_{2}x_{3}, x_{3}x_{4}, x_{4}x_{1}\}\subseteq E(G)$. Evidently $d_{G}(x_{i})\geq5$ and $G/xx_{i}$ is $4$-connected $(i=1,2,3,4)$. Similar to Theorem \ref{thm1}, we let $T_{1}^{\prime}$ and $T_{2}^{\prime}$ be nontrivial $4$-cuts of $G/xx_{1}$ and $G/xx_{2}$, separately. For $i\in\{1, 2\}$, let $F_{i}^{\prime}$ be a fragment of $G/xx_{i}-T_{i}^{\prime}$, $\overline{F_{i}^{\prime}}=G/xx_{i}-T_{i}^{\prime}-F_{i}^{\prime}$.
Let $F_{i}$, $T_{i}$, $\overline{F_{i}}$ be the sets in $G$ corresponding to $F_{i}^{\prime}$, $T_{i}^{\prime}$, $\overline{F_{i}^{\prime}}$ in $G/xx_{i}$. Without loss of generality, we assume that $x_{1}\in T_{1}\cap F_{2}$,  $x_{2}\in F_{1}\cap T_{2}$,  $x_{3}\in T_{1}\cap \overline{F_{2}}$ and  $x_{4}\in \overline{F_{1}}\cap T_{2}$. Furthermore, we have $1\leq|T_{1}\cap T_{2}|\leq2$, otherwise, $|V(G)|\leq11$.

If $|T_{1}\cap T_{2}|=2$, then without loss of generality, we assume that $|T_{1}\cap \overline{F_{2}}|=1$ and $|\overline{F_{1}}\cap T_{2}|=1$. Then we have $|F_{1}\cap\overline{F_{2}}|=|\overline{F_{1}}\cap F_{2}|=1$. And then the graph $G$ has the first structure in Figure \ref{fig3}.

If $|T_{1}\cap T_{2}|=1$, we have $|F_{1}\cap T_{2}|=1$ or $|\overline{F_{1}}\cap T_{2}|=1$. For otherwise $|V(G)|\leq13$, a contradiction.
Similarly, we can obtain that $|F_{2}\cap T_{1}|=1$ or $|\overline{F_{2}}\cap T_{1}|=1$. Without loss of generality, we assume that $|\overline{F_{1}}\cap T_{2}|=1$ and $|\overline{F_{2}}\cap T_{1}|=1$. Thus, the graph $G$ has the second structure in Figure \ref{fig3}.

By Lemma \ref{lem9} and Lemma \ref{lem10}, we know that $G$ has a quasi $5$-contractible subgraph $H$ such that $\|H\|\leq3$.
\hfill\qedsymbol

\end{document}